\documentclass[11pt,reqno]{amsart}
\usepackage[T1]{fontenc}
\usepackage{amsmath,amsthm,amssymb,mathrsfs}
\usepackage{lmodern,microtype,mathtools,bm,enumitem,needspace}
\usepackage{xcolor}
\usepackage{hyperref}
\usepackage{bookmark}
\microtypesetup{expansion=false}
\setlist[itemize]{leftmargin=2.2em,itemsep=0.15em,topsep=0.25em}
\newtheorem{theorem}{Theorem}[section]
\newtheorem{lemma}[theorem]{Lemma}
\newtheorem{proposition}[theorem]{Proposition}
\newtheorem{corollary}[theorem]{Corollary}
\theoremstyle{definition}
\newtheorem*{AIuse}{AI Usage Statement}
\theoremstyle{remark}\newtheorem{remark}[theorem]{Remark}
\usepackage[nameinlink,capitalize,noabbrev]{cleveref}
\newcommand{\Hn}{\mathbb H^n}
\newcommand{\Haus}{\mathcal H}
\newcommand{\LTheta}{{L_\Theta}}
\newcommand{\gOne}{{g_1}}
\newcommand{\dd}{\,\mathrm d}
\newcommand{\spt}{\operatorname{spt}}
\newcommand{\RR}{\mathbb R}
\newcommand{\eps}{\varepsilon}

\newcommand{\divh}{\operatorname{div}_H}
\newcommand{\Ca}{\mathcal C}
\newcommand{\PSphere}{\mathcal S}

\numberwithin{equation}{section}
\allowdisplaybreaks[1]
\title[The Isoperimetric Inequality]{The Isoperimetric Inequality\\
in Higher-Dimensional Heisenberg Groups}
\hypersetup{hypertexnames=false,hidelinks,bookmarksnumbered=true,
  pdftitle={The Isoperimetric Inequality in Higher-Dimensional Heisenberg Groups},
  pdfauthor={Wei-Chun Chen, Wei-Ting Kao, and Chun-Kai Lien}}
\author{Wei-Chun Chen}
\author{Wei-Ting Kao}
\author{Chun-Kai Lien}
\date{}
\subjclass[2020]{Primary 53C17, 49Q20; Secondary 49Q10, 49Q15}
\keywords{Heisenberg group, sharp isoperimetric inequality, Pansu ball,
  horizontal perimeter}
\begin{document}
\begin{abstract}
We prove the sharp horizontal isoperimetric inequality in
$\Hn$, $n\ge2$, with equality precisely for left translates and
Heisenberg dilations of Pansu balls, up to Haar-null sets.
\end{abstract}
\maketitle
\vspace{-12pt}

% ===== 01-selected-inputs =====
\section{Introduction}\label{sec:target}

The sharp isoperimetric problem asks which sets enclose a prescribed
Haar volume with the least horizontal perimeter. We identify every
minimizer in $\Hn$, $n\ge2$, as a Pansu ball. Write $Q=2n+2$
and $(z,t)\in\RR^{2n}\times\RR$. In the conventions of
Section~\ref{sec:centre-reconstruction}, the Pansu ball of horizontal radius
one centred at the identity and its isoperimetric ratio are
\begin{gather}
 \Omega:=\left\{(z,t):|z|<1,\quad
 2|t|<\arccos|z|+|z|\sqrt{1-|z|^2}\right\},\notag\\
 C_n:=\frac{P_H(\Omega)}
 {|\Omega|^{(Q-1)/Q}}
 =\frac{4n(n+1)}{2n+1}
 \left(\frac{\pi^{n+1}(2n+1)!}
 {2^{2n+1}(n+1)(n!)^3}\right)^{1/(2n+2)}.
 \label{eq:sharp-constant}
\end{gather}
Left translations and Heisenberg dilations preserve this ratio.
The problem in $\mathbb H^1$ goes back to Pansu~\cite{Pansu1982}.

\begin{theorem}[Sharp horizontal isoperimetric inequality]
\ifcsname r@thm:main\endcsname%
\bookmark[level=2,dest={\getrefbykeydefault{thm:main}{anchor}{}}]%
  {Theorem \ref*{thm:main}: Sharp horizontal isoperimetric inequality}%
\fi
Let\phantomsection\label{thm:main} $n\ge2$ and $Q=2n+2$. For every measurable set $E\subset\Hn$
with finite horizontal perimeter and $0<|E|<\infty$,
\begin{equation}\label{eq:sharp-isoperimetric}
 P_H(E)\ge C_n|E|^{(Q-1)/Q}
 =P_H(\Omega)
   \left(\frac{|E|}{|\Omega|}\right)^{(Q-1)/Q}.
\end{equation}
The constant $C_n$ is optimal. Equality holds if and only if, up to
a Haar-null set, $E=a\cdot\delta_r \Omega$ for some
$a\in\Hn$ and $r>0$.
\end{theorem}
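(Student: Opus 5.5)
We plan to reduce to sets with extra symmetry, solve the reduced problem exactly, and then run a rigidity analysis for the equality case. \textbf{Step 1 (existence and regularity).} By Leonardi--Rigot, isoperimetric sets exist for every volume. They are bounded and, after modification on a Haar-null set, they are $\delta$-Ahlfors regular domains. Dilation invariance means it suffices to prove that every minimizer of volume $|\Omega|$ is a left translate of $\Omega$. Once this is known, \eqref{eq:sharp-isoperimetric} holds with constant $C_n$ for all finite-perimeter sets by comparison with a minimizer of the same volume, and $C_n$ is attained by $\Omega$. The closed form of $C_n$ in \eqref{eq:sharp-constant} is a direct computation. Using the profile $2|t|=\arccos|z|+|z|\sqrt{1-|z|^2}$, we integrate the volume in $|z|$ with the coarea formula. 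We compute $P_H(\Omega)$ through the horizontal divergence theorem with the field $Z=\tfrac12 z$ type, using that $\divh$ of the radial horizontal field is constant.

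\textbf{Step 2 (symmetrization).} We would reduce to sets invariant under the unitary group $U(n)$ acting on $z$ and symmetric under $t\mapsto -t$. For the reflection in $t$, Monti's cut-and-reflect argument needs the horizontal perimeter to split across $\{t=0\}$. We then take the half of smaller perimeter-to-volume ratio and reflect it. Rotational symmetry is the harder input. There is no Steiner symmetrization in $z$ that decreases $P_H$, because horizontal perimeter mixes $z$ and $t$. We would instead use a Ritoré-type approach. In the spirit of Franceschi--Monti type arguments, we would first obtain convexity or star-shapedness of minimizers. We would also use that the first variation along the generators of $U(n)$, which are isometries commuting with dilations, yields a one-parameter family of minimizers. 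A second-variation (stability) argument then forces $U(n)$-invariance.

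\textbf{Step 3 (the reduced problem).} For a $U(n)$-invariant set, write $E=\{(z,t): f_1(|z|)<t<f_2(|z|)\}$ with profile curve $\gamma$ in the $(|z|,t)$ half-plane. Both $P_H$ and $|E|$ become one-dimensional integrals with weight $|z|^{2n-1}$. The Euler--Lagrange equation states that the horizontal mean curvature is constant, $H\equiv c$. This is an ODE for $\gamma$ with a first integral. Boundary regularity at the axis $|z|=0$ and at the characteristic points (the poles on the $t$-axis) forces the explicit profile of $\delta_r\Omega$. The proof follows Ritoré--Rosales, who classify $C^2$ CMC rotationally symmetric surfaces in $\Hn$. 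Once $E$ has the form $a\cdot\delta_r\Omega$, uniqueness in the equality case follows from Step 2 applied to any extremal, since every extremal is a minimizer.

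\textbf{Main obstacle.} The crux is Step 2 combined with regularity. Rotational symmetry of minimizers without a priori $C^2$ regularity is exactly the open part of Pansu's conjecture. I would attack it as follows. First establish that minimizers are $C^{1,1}$ or better away from a small characteristic set, using $n\ge2$ to get partial regularity for $\Lambda$-minimizers (here the characteristic set has codimension large enough to be removable). Then apply a calibration argument: construct a horizontal unit vector field $X$ with $\divh X=\text{const}$ whose integral curves foliate the complement of a point by Pansu-sphere geodesics. Integrating $\divh X$ over $E$ against $P_H$ then gives the inequality directly, with equality forcing $\nu_E=X$ and hence $E$ a Pansu ball.
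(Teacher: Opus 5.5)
Your Step~1 reduction matches the paper. By Leonardi--Rigot, dilations and the profile $I(v)$, the inequality, the optimality of $C_n$ and the equality cases all follow once every bounded global minimizer is identified as a Pansu ball, and every equality set is itself a minimizer. The gap is that everything after this reduction \emph{is} that classification, and none of your proposed routes supplies it.

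(i) \emph{Symmetry.} In Step~2 the generators of $U(n)$ act by volume-preserving isometries. Rotated copies of a minimizer are therefore again minimizers, and formally the induced normal speeds are Jacobi fields with zero second variation. This is fully compatible with stability, so nothing forces invariance. The available stability-to-rigidity results, such as the companion theorem on stable boundaries, assume $C^2$ boundaries. The $t$-reflection is not available before symmetry is known either. No isometry of $\Hn$ fixes $\{t=c\}$ pointwise while exchanging its sides, and $(z,t)\mapsto(\bar z,2c-t)$ glues the trace of $E$ at $(w,c)$ to its trace at $(\bar w,c)$. This creates new perimeter on the cut unless the trace is conjugation-invariant, which is what Monti's axial setting provides. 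Reflection also produces new sets, so by itself it cannot classify $E$.

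(ii) \emph{Regularity.} The fallback claim, that $\Lambda$-minimizers of $P_H$ are $C^{1,1}$ off a small characteristic set when $n\ge2$, is not an available theorem. The integrand $|N_b|$ degenerates at characteristic directions, no partial regularity theory of this kind is known for $H$-perimeter almost-minimizers, and codimension counting does not produce one.

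(iii) \emph{Calibration.} The calibration you describe cannot exist. If $X$ is a smooth horizontal field on $\Hn\setminus\{p\}$ with $|X|\le1$ and $\divh X\equiv c>0$, take $F=b\cdot\delta_R\Omega$ with $p\notin\overline F$. Then
\[
 cR^Q|\Omega|=\int_F\divh X\dd x=\int\langle\nu_F,X\rangle\dd\mu_F\le P_H(F)=R^{Q-1}P_H(\Omega),
\]
which fails for large $R$. The horizontal normals of the concentric spheres $\delta_r\PSphere_0$ have divergence $2n/r$, not a constant. Ritor\'e's field is built from vertical translates of a single sphere and lives only in the vertical cylinder over the horizontal disk. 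That is why it compares only sets inside that cylinder which contain the disk, a configuration you cannot impose on an arbitrary minimizer.

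The idea you are missing is to classify $E$ without ever proving regularity or symmetry of $E$ itself. The paper approximates $E$ by minimizers $F_j$ of $P_{\eps_j}(F)-I(|F|)+(|F|-v_0)^2+\tau_j\mathscr P_E(F)$ in a fixed ball. These converge strictly in $BV_H$ to $E$, have prescribed curvature $H_j\to2n$, and are stable under the linearized volume constraint. For fixed $j$ they are smooth off a small singular set, \emph{including at characteristic points}, because $\eps_j>0$ makes the integrand elliptic. With cutoffs of vanishing energy (built using $n\ge2$), the companion centre estimate gives $\int_{\Sigma_j}|D_H^{\Sigma_j}\Ca_j|\dd\sigma_{H,j}\to0$, with constants independent of the fixed-$j$ regularity. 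Here \eqref{eq:centres} places every regular point $x$ on the unit sphere $\PSphere_{\Ca_j(x)}$. The limit is taken with the centre-weighted currents $\varphi(\Ca_j)T_{F_j}$. These are asymptotically closed (Lemma~\ref{lem:centre-weight-error}) and converge to closed subcurrents $\tau_E\lambda_\varphi$ of $T_E$. Since $\mathbf1_E$ attains equality in the Sobolev inequality with constant $P_H(E)/|E|^{(Q-1)/Q}$, strict convexity of $L^{Q/(Q-1)}$ forces $\lambda_\varphi=c_\varphi\mu_E$ (Proposition~\ref{weak:prop:closed-subcurrent}). Concentrating $\varphi$ near a point $a$ in the support of the limiting centre measure then gives $\spt\mu_E\subset\PSphere_a$, hence $E=\Omega_a$ almost everywhere. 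This single limit argument replaces your symmetry, regularity and calibration steps, and it applies directly to every equality set.

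A smaller slip in Step~1: for the radial field $Z=\sum_k(x_kX_k+y_kY_k)$ you get $\int_\Omega\divh Z\dd x=2n|\Omega|=\int\langle z,\nu_\Omega\rangle\dd\mu_\Omega$. By \eqref{eq:centres} with centre $0$, $\langle z,\nu_\Omega\rangle=|N_b|$, which is not constant on $\PSphere_0$, so this does not compute $P_H(\Omega)$. The dilation identity $(Q-1)P_H(\Omega)=2nQ|\Omega|$ does, and it gives the prefactor in \eqref{eq:sharp-constant}.
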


Existence of minimizers reduces the inequality and its equality cases
to classifying minimizers, without assuming boundary regularity.

Danielli, Garofalo and Nhieu~\cite{DGNIsoperimetric} prove the sharp
inequality for sets described by upper and lower $C^2$ graphs over
the same horizontal ball, with equal upper and lower volumes,
zero boundary values and disjoint interior zero sets.
Monti~\cite{MontiAxial} classifies minimizers among axially symmetric
finite-perimeter sets. Ritor\'e's calibration theorem~\cite[Theorem~3.1]{Ritore} gives the
sharp comparison and equality classification for finite-perimeter sets
containing a horizontal disk and contained in the vertical cylinder
over that disk. The companion rigidity theorem~\cite[Theorem~1.1]{Rigidity}
classifies volume-preserving stable boundaries of bounded open sets
under a $C^2$ regularity assumption.

For the selected elliptic approximations of a minimizer, the
reconstruction in \cite[Section~2]{Rigidity} associates a candidate
centre to each regular boundary point and its normal data.
This centre is constant on a unit Pansu sphere.
The quantitative estimate of \cite[Theorem~4.1]{CentreRigidity}
makes its horizontal tangential derivative tend to zero
in $L^1$ of horizontal perimeter.

Different components may still carry different centres, but a
global minimizer cannot allocate a closed part of its boundary
independently of the rest. The volume profile gives a Sobolev
inequality attained by $\mathbf1_E$; strict convexity of
$L^{Q/(Q-1)}$ forces every closed part with the same outward
orientation to occupy a constant proportion of the whole boundary.
Proposition~\ref{weak:prop:closed-subcurrent} makes this precise
for closed subcurrents of the horizontal boundary current.
Weights supported near a limiting centre then force the whole
boundary onto the sphere with that centre.

The restriction $n\ge2$ enters in two places. Near characteristic
points it provides two transverse coordinates and hence cutoffs
with vanishing surface Dirichlet energy. In the current argument,
$2n-1\ge n+1$ places the test forms in the upper Rumin complex,
where $d_c$ is the exterior derivative and satisfies the usual
product rule.

Section~\ref{sec:weak-characterization} states the recognition
criterion using \cite{Rigidity}. Section~\ref{sec:approximation}
approximates the minimizer supplied by Leonardi--Rigot~\cite{LR}.
Section~\ref{sel:sec:capacity} constructs cutoffs around singular
and characteristic points and applies \cite[Theorem~4.1]{CentreRigidity}.
Sections~\ref{sec:weighted-currents} and~\ref{sec:one-centre-recognition}
classify the limit.

Unless stated otherwise, constants without a subscript $j$ are
uniform in $j$; constants with that subscript may depend on $j$.

\begin{AIuse}
The authors developed the proof strategy for the $C^2$ setting
in their companion paper on stable rigidity. ChatGPT 6.0 Astra
assisted in identifying relevant references and suitable mathematical
tools for extending this strategy to the more general setting
considered in this paper. The authors personally checked the
references and verified the applicability of the suggested tools.
The mathematical arguments were developed and checked by the
authors, who take full responsibility for the correctness and
content of the manuscript.
\end{AIuse}

\section{Centre reconstruction and Pansu ball recognition}
\label{sec:weak-characterization}

\subsection{Conventions and centre reconstruction}
\label{sec:centre-reconstruction}

Write $x=(z,t)$ for a point and $\Ca=(z_0,t_0)$ for its
reconstructed centre.

Let $T=\partial_t$ and use
\[
 X_k=\partial_{x_k}+y_k\partial_t,\qquad
 Y_k=\partial_{y_k}-x_k\partial_t,\qquad [X_k,Y_k]=-2T,\qquad JX_k=Y_k.
\]
The group law is
$(z,t)\cdot(w,s)=(z+w,t+s-\langle Jz,w\rangle)$.

The contact and horizontal volume forms are
\[
 \Theta=dt-\sum_k(y_k\dd x_k-x_k\dd y_k),\qquad
 \Omega_H=dx_1\wedge dy_1\wedge\cdots\wedge dx_n\wedge dy_n.
\]
Put $\xi=\ker\Theta$. Thus $d\Theta|_\xi=2\sum_kdx_k\wedge dy_k$, and
$\Theta\wedge\Omega_H=dt\wedge\Omega_H$ is Haar volume.
The Levi metric on $\xi$ is $\LTheta=\tfrac12d\Theta(\cdot,J\cdot)$;
put $\gOne=\LTheta+\Theta^2$.
The frame $X_1,Y_1,\ldots,X_n,Y_n,T$ is orthonormal for $\gOne$.
Horizontal vectors are identified with their coefficients in $(X_k,Y_k)$.

For $V=\sum_k(f_kX_k+g_kY_k)$, put
$\divh V=\sum_k(X_kf_k+Y_kg_k)$.
For an open set $U\subset\Hn$ and $f\in L^1_{\mathrm{loc}}(U)$,
define the distributional horizontal derivative by
\[
 \langle D_Hf,V\rangle
 =-\int_U f\,\divh V\dd\mathcal L^{2n+1}
 \qquad (V\in C_c^\infty(U;\xi)),
\]
where $\mathcal L^{2n+1}$ is Haar measure in these coordinates.
Let $\mathcal M(U;\RR^{2n})$ denote the finite $\RR^{2n}$-valued
Radon measures on $U$, and set
\[
 BV_H(U):=\bigl\{f\in L^1(U):D_Hf\in\mathcal M(U;\RR^{2n})\bigr\}.
\]
For $f\in BV_H(U)$, let $|D_Hf|$ be the total variation measure of
$D_Hf$ (see \cite{FSSC}). Its mass on $U$ is
\[
 |D_Hf|(U)=\sup_{\substack{V\in C_c^\infty(U;\xi)\\ |V|\le1}}
       \left(-\int_U f\,\divh V\dd\mathcal L^{2n+1}\right).
\]
For $f_j,f\in BV_H(U)$, \emph{strict convergence in $BV_H(U)$} means
\[
 f_j\to f\quad\hbox{in }L^1(U),\qquad
 |D_Hf_j|(U)\to|D_Hf|(U).
\]

A measurable set $E$, possibly of infinite volume, has finite horizontal
perimeter if $D_H\mathbf1_E$ is a finite vector measure. Put
\[
 P_H(E)=|D_H\mathbf1_E|(\Hn),\qquad
 \mu_E=|D_H\mathbf1_E|,\qquad
 D_H\mathbf1_E=-\nu_E\mu_E.
\]
The polar decomposition determines the outward unit horizontal normal
$\nu_E$ $\mu_E$-almost everywhere. The minus sign records the drop
of $\mathbf1_E$ from $1$ to $0$ across an outward boundary crossing.

On a smooth hypersurface $\Sigma$, let
$\sigma_\Sigma=\Haus_{\gOne}^{2n}|_\Sigma$ be its Riemannian surface measure.
For the outward $\gOne$-unit normal $N=N_b+N_TT$, a point is
\emph{characteristic} when $N_b=0$ and noncharacteristic otherwise.
Write $\Sigma^*=\{x\in\Sigma:N_b(x)\ne0\}$. On the whole smooth patch,
\begin{equation}\label{sel:tag-2-1}
 d\sigma_H=|N_b|\,d\sigma_\Sigma.
\end{equation}
Write $\Omega_a$ for the unit-radius Pansu ball centred at $a$ and
$\PSphere_a=\partial\Omega_a$. The centre reconstruction is
\cite[Lemma~2.3]{Rigidity}
\begin{equation}\label{eq:centres}
 \begin{aligned}
 z_0&=z-|N_b|N_b-N_TJN_b,\\
 t_0&=t+\langle Jz_0,z\rangle
       -\tfrac12\bigl(\arcsin N_T+N_T|N_b|\bigr),\qquad
 \Ca=(z_0,t_0).
 \end{aligned}
\end{equation}
Since $\langle N_b,JN_b\rangle=0$ and $|N_b|^2+N_T^2=1$,
\begin{equation}\label{eq:profile}
 \begin{aligned}
 |z-z_0|&=|N_b|,\\
 2|t+\langle Jz_0,z\rangle-t_0|
 &=\arccos|N_b|+|N_b|\sqrt{1-|N_b|^2}.
 \end{aligned}
\end{equation}
Comparison with \eqref{eq:sharp-constant} gives $x\in\PSphere_{\Ca(x)}$,
including at characteristic points.
The map $v\mapsto|v|v$ is $C^1$, and
$s\mapsto\tfrac12(\arcsin s+s\sqrt{1-s^2})$ has derivative
$\sqrt{1-s^2}$ and a $C^1$ constant continuation beyond $[-1,1]$.
Thus \eqref{eq:centres} is $C^1$ in the position and unit normal,
and $\Ca$ is $C^1$ on every oriented $C^2$ patch.
The same formulas give
\[
 |z_0-z|\le1,\qquad
 |t_0-t|\le|z_0|\,|z|+\pi/4.
\]

For a function on $\Sigma^*$, $D_H^\Sigma$ denotes differentiation along
$\xi\cap T\Sigma^*$. For vector-valued functions its norm is
Hilbert--Schmidt, with $\gOne$ on the domain and the Euclidean
metric on the target.
In integrals against $\sigma_H$, extend these derivatives by zero
at characteristic points.

\subsection{Pansu ball recognition criterion}

The reconstruction \eqref{eq:centres} uses unit-radius spheres;
Section~\ref{sel:sec:selection} gives the required normalization.

\begin{samepage}
\begin{theorem}[Pansu-ball recognition criterion]
\ifcsname r@thm:weak-limit-characterization\endcsname%
\bookmark[level=3,dest={\getrefbykeydefault{thm:weak-limit-characterization}{anchor}{}}]%
  {Theorem \ref*{thm:weak-limit-characterization}: Pansu-ball recognition criterion}%
\fi
\label{thm:weak-limit-characterization}
Let $n\ge2$ and let $E$ be a bounded set of finite horizontal
perimeter with $0<|E|<\infty$ that globally minimizes horizontal
perimeter among sets of volume $|E|$.
Suppose there are sets
$F_j$ of finite Riemannian perimeter satisfying the following three
conditions.
\begin{enumerate}[label=\textup{(B\arabic*)},leftmargin=3em]
\item \textbf{Strict $BV_H$ convergence.}
\[
 \mathbf1_{F_j}\to\mathbf1_E
 \quad\hbox{strictly in }BV_H(\Hn).
\]
\item \textbf{Regular boundary parts and vanishing cutoff variation.}
The horizontal perimeter measures
$\sigma_{H,j}=|D_H\mathbf1_{F_j}|$ are carried by oriented regular
parts $\Sigma_j\subset\partial F_j$ of class $C^2$.
Use the outward normals and set
\[
 \sigma_j=\Haus_{\gOne}^{2n}|_{\Sigma_j},\qquad
 d\sigma_{H,j}=|N_{b,j}|\,d\sigma_j.
\]
For each fixed $j$, there are cutoffs
$\eta_{j,\ell}\in C_c^1(\Sigma_j)$, $0\le\eta_{j,\ell}\le1$,
such that, as $\ell\to\infty$,
\[
 \eta_{j,\ell}\to1\quad \sigma_{H,j}\hbox{-almost everywhere},
 \qquad
 \int_{\Sigma_j}|D_H^{\Sigma_j}\eta_{j,\ell}|\dd\sigma_{H,j}\to0.
\]
\item \textbf{Vanishing total centre variation.}
For the reconstruction \eqref{eq:centres} at unit scale,
\[
 \int_{\Sigma_j}|D_H^{\Sigma_j}\Ca_j|\dd \sigma_{H,j}\to0
 \qquad\hbox{as }j\to\infty.
\]
\end{enumerate}
Then there is $a\in\Hn$ such that $E=\Omega_a$ almost everywhere.
\end{theorem}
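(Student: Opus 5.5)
Pass the centre maps $\Ca_j$ to a limit centre measure, use minimality to show that this measure is a single Dirac mass, and then read off that $\partial E$ lies on one Pansu sphere.

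\textbf{Step 1: a compact limit.} The reconstruction \eqref{eq:centres} gives $|z_0-z|\le1$ and $|t_0-t|\le|z_0||z|+\pi/4$. Since $E$ is bounded and (B1) gives strict convergence, the masses $\sigma_{H,j}(\Hn)$ converge to $P_H(E)$. Because $\partial F_j$ carries the perimeter and $\mathbf 1_{F_j}\to\mathbf 1_E$ in $L^1$, we may first discard the mass of $\sigma_{H,j}$ far from $\spt\mu_E$. The push-forwards $(\Ca_j)_\#\sigma_{H,j}$ are then tight. Form the joint measures $\gamma_j=(\mathrm{id},\Ca_j)_\#\sigma_{H,j}$ on $\Hn\times\Hn$. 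After passing to a subsequence, $\gamma_j\rightharpoonup\gamma$, whose first marginal is $\mu_E$ (strict convergence rules out loss of mass). Since $x\in\PSphere_{\Ca(x)}$ for every $x$, the measure $\gamma$ is supported on $\{(x,a):x\in\PSphere_a\}$.

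\textbf{Step 2: localization to one centre.} The key device is weighted subcurrents. For a test function $\phi\in C_c^1(\Hn)$ of the centre variable, consider the weighted horizontal boundary currents $T_j^\phi=(\phi\circ\Ca_j)\,\eta_{j,\ell}\,\nu_j\,\sigma_{H,j}$, acting on test forms of degree $2n-1\ge n+1$ in the upper Rumin complex, where $d_c=d$. The boundary of $T_j^\phi$ is controlled by
\[
 \int|D_H^{\Sigma_j}(\phi\circ\Ca_j)|\,\eta_{j,\ell}\dd\sigma_{H,j}
 +\int|D_H^{\Sigma_j}\eta_{j,\ell}|\dd\sigma_{H,j}.
\]
The second term vanishes as $\ell\to\infty$ by (B2), and the first is at most $\|\nabla\phi\|_\infty$ times the quantity in (B3), so it vanishes as $j\to\infty$. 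A diagonal choice of $\ell=\ell(j)$ therefore produces, in the limit, a \emph{closed} subcurrent $\phi\cdot\partial E$ of the horizontal boundary current. It has weight $\int\phi(a)\,d\gamma(x,a)$ disintegrated over $x$, and the same outward orientation as $E$.

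Apply Proposition~\ref{weak:prop:closed-subcurrent}, the strict-convexity consequence of minimality announced in the introduction. It says that every such closed subcurrent with weight $0\le w\le1$ has a constant proportion of the boundary mass; in effect $w$ is $\mu_E$-a.e.\ constant. Take $\phi$ ranging over $[0,1]$-valued functions. For every $\phi$, the disintegrated weight $x\mapsto\int\phi\,d\gamma_x$ is then constant $\mu_E$-a.e. Hence $\gamma_x=\beta$ for a single probability measure $\beta$ on centres, independent of $x$, and $\gamma=\mu_E\otimes\beta$.

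\textbf{Step 3: one sphere and conclusion.} Pick any $a\in\spt\beta$. Because $\gamma$ is supported on $\{x\in\PSphere_a\}$, we get $\spt\mu_E\subset\PSphere_a$, so the entire reduced boundary lies on one unit Pansu sphere. A bounded set of positive volume whose perimeter measure is carried by $\PSphere_a$ has $\mathbf 1_E$ locally constant off $\PSphere_a$, since $D_H\mathbf 1_E=0$ there. As $E$ is bounded and $\PSphere_a$ separates $\Hn$ into $\Omega_a$ and an unbounded component, $E=\Omega_a$ a.e.

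\textbf{Main obstacle.} The main obstacle is Step 2: justifying the passage of the weighted currents to the limit. Strict convergence gives weak convergence of $\nu_j\sigma_{H,j}$ to $\nu_E\mu_E$, but we also need the weights $\phi\circ\Ca_j$ to converge jointly with the normals. For that I would use Reshetnyak continuity under strict convergence applied to $\gamma_j$ with the normal included as a third variable. The second delicate point is verifying the product rule for $d_c$ on the cut-off currents, which is where $n\ge2$ is used.
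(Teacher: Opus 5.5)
Your proposal is correct and follows the paper's architecture: push forward the horizontal perimeter by the centre map, weight the boundary currents by test functions of the centre, make these weighted currents asymptotically closed via (B2)--(B3), apply Proposition~\ref{weak:prop:closed-subcurrent} to get a constant proportion of $\mu_E$, and conclude that $\spt\mu_E$ lies on a single sphere $\PSphere_a$. The differences are technical:
\begin{itemize}
\item To identify the orientation of the limit current, the paper does not use Reshetnyak continuity. It uses the two-weight inequalities $\|T_\varphi\|\le\lambda_\varphi$ and $\|T_E-T_\varphi\|\le\mu_E-\lambda_\varphi$, together with the equality case of the triangle inequality.
\item Instead of disintegrating $\gamma$, the paper localizes one bump near a point $a\in\spt\vartheta$ and intersects the sets $\overline B_r(a)\cdot\PSphere_0$ as $r\to0$. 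Your disintegration route needs a countable measure-determining family of $\phi$ to obtain a common $\mu_E$-null set.
\item Your final step also needs $|\PSphere_a|=0$, which the paper obtains from Fubini's theorem.
\end{itemize}
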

\end{samepage}

In (B2), $\sigma_j$ is Riemannian area and $\sigma_{H,j}$ is
horizontal perimeter; ``carried by $\Sigma_j$'' means
$\sigma_{H,j}(\Hn\setminus\Sigma_j)=0$.
The cutoff limit $\ell\to\infty$ precedes $j\to\infty$.
All centre norms are Euclidean coordinate norms.

Condition (B3) controls variation on each component. Identifying
their centres requires constancy of closed subcurrents,
proved in Section~\ref{sec:one-centre-recognition}.

\section{Pinned elliptic approximations}
\label{sec:approximation}

Global minimizers exist at every positive volume, and each has a
bounded open representative~\cite{LR}. Fix any such representative
$E_0$.

\subsection{Normalization and pinning}
\label{sel:sec:selection}
The dilations $\delta_r(z,t)=(rz,r^2t)$, $r>0$, scale volume by
$r^Q$ and perimeter by $r^{Q-1}$. The unit-radius reconstruction
\eqref{eq:centres} requires the corresponding mean-curvature value
$2n$, so normalize the volume-profile slope of a global minimizer
$E_0$ by choosing
\begin{equation}\label{eq:target-scale}
 \rho_0:=\frac{Q-1}{2nQ}\frac{P_H(E_0)}{|E_0|}>0,
 \qquad E:=\delta_{\rho_0}E_0,\qquad
 \frac{Q-1}{Q}\frac{P_H(E)}{|E|}=2n.
\end{equation}
The scaling laws give the last identity and preserve the
isoperimetric ratio and global minimality.

For a set $F$ of finite $\gOne$-Riemannian perimeter $P_1(F)$,
let $\partial^*F$ denote its reduced boundary: at its points, $F$
has a measure-theoretic tangent half-space and an outward
$\gOne$-unit normal $N_F$. The Riemannian perimeter measure is
concentrated on $\partial^*F$. Write $N_F=N_{b,F}+N_{T,F}T$ and put
\[
 \Phi_\eps(N)=\sqrt{|N_b|^2+\eps^2N_T^2},
 \qquad
 P_\eps(F)=\int_{\partial^*F}\Phi_\eps(N_F)\dd\Haus_{\gOne}^{2n}.
\]
Since $|N_{b,F}|^2+N_{T,F}^2=1$, $\Phi_1(N_F)=1$.
For $\eps>0$, set $P_\eps(F)=+\infty$ when $P_1(F)=+\infty$.
We write $P_0=P_H$, with the surface measure in
\eqref{sel:tag-2-1}.

The parameter $\eps>0$ gives the vertical normal component a
positive cost; $\tau_j$ below sets the pinning strength.
All variations are taken at fixed $\eps_j>0$.

\begin{proposition}
\ifcsname r@prop:normalized-approximants\endcsname%
\bookmark[level=3,dest={\getrefbykeydefault{prop:normalized-approximants}{anchor}{}}]%
  {Proposition \ref*{prop:normalized-approximants}: Normalized approximants}%
\fi
\label{prop:normalized-approximants}
For every bounded open set $E$ that globally minimizes horizontal
perimeter among sets of volume $|E|$ and is normalized by
\eqref{eq:target-scale}, there are finite
Riemannian-perimeter sets $F_j$ in a common compact set satisfying \textup{(B1)--(B3)} of
Theorem~\ref{thm:weak-limit-characterization}.
\end{proposition}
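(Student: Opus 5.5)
The plan is to obtain $F_j$ as minimizers of a pinned, elliptically regularized isoperimetric functional, and to read off (B1)--(B3) from the regularity, first variation and stability of these minimizers.

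\textbf{Construction.} Fix $\eps_j\downarrow0$ and pinning strengths $\tau_j\to\infty$ to be tuned later. Let $B$ be a large closed ball containing a neighbourhood of $\overline E$. Minimize
\[
 \mathcal F_j(F)=P_{\eps_j}(F)-2n|F|+\tau_j\int_{F\triangle E}\operatorname{dist}_{\gOne}(x,\partial E)\dd x ,
\]
or a volume-penalized variant, among finite Riemannian-perimeter sets $F\subset B$.
\begin{itemize}
\item Since $\Phi_{\eps}$ is a smooth, uniformly convex elliptic integrand for $\eps>0$, the direct method gives a minimizer.
\item Almgren--Allard / Schoen--Simon--Almgren regularity then gives a $C^{2}$ (indeed $C^{1,\alpha}$, upgraded via the Euler--Lagrange equation) reduced boundary $\Sigma_j$. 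The singular set has $\Haus^{2n-2}$-measure zero, so codimension at least two in $\partial F_j$.
\item The pinning term, of Lipschitz type, forces $F_j$ to stay in a neighbourhood of $E$ and keeps the sets in a common compact set.
\end{itemize}

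\textbf{(B1).} Compare $\mathcal F_j(F_j)\le\mathcal F_j(E^{(j)})$, where $E^{(j)}$ is a smooth mollified approximation of $E$ with $P_{\eps_j}(E^{(j)})\to P_H(E)$. This is possible by Riemannian approximation of $BV_H$ sets with $\eps_j$ tending to zero slowly relative to the smoothing.
\begin{itemize}
\item Compactness in $BV_H$ and lower semicontinuity of $P_H\le P_{\eps}$ give a subsequential limit $F_\infty$ with $P_H(F_\infty)-2n|F_\infty|\le P_H(E)-2n|E|$, with the pinning forcing $F_\infty=E$ once $\tau_j\to\infty$.
\item Matching the limsup and liminf yields $P_H(F_j)\to P_H(E)$, i.e.\ strict convergence.
\item The normalization \eqref{eq:target-scale} is precisely what makes $E$ a critical point of $P_H-2n|\cdot|$ along the volume profile, so the Lagrange multiplier is $2n$.
\end{itemize}

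\textbf{(B2).} The horizontal perimeter $\sigma_{H,j}=|N_{b,j}|\sigma_j$ vanishes on the singular set, since that set is $\sigma_j$-null, so it is carried by $\Sigma_j$. The cutoffs must remove two kinds of points.
\begin{itemize}
\item \emph{Singular points.} These have codimension $\ge2$ in a $2n$-dimensional surface. Logarithmic cutoffs have vanishing $W^{1,1}$ (indeed $W^{1,2}$) energy against $\sigma_j$, and hence against $\sigma_{H,j}\le\sigma_j$.
\item \emph{Characteristic points.} For $C^2$ surfaces with $n\ge2$ these form a set of dimension at most $n\le 2n-2$ (Balogh's estimate). On this set $D_H^{\Sigma}$ acts only in the horizontal tangent directions, and the weight $|N_b|$ vanishes there. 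So cutoffs in two transverse coordinates have $\int|D_H^\Sigma\eta|\,|N_b|\dd\sigma_j\to0$.
\end{itemize}

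\textbf{(B3), the main obstacle.} I would invoke \cite[Theorem~4.1]{CentreRigidity}, which bounds $\int|D^{\Sigma}_H\Ca_j|\dd\sigma_{H,j}$ by a combination of three quantities:
\begin{itemize}
\item the deviation of the anisotropic mean curvature from $2n$;
\item the defect in the stability inequality;
\item $\eps_j$-error terms.
\end{itemize}
The first variation gives curvature $2n$ plus a pinning contribution of size $\tau_j\operatorname{dist}(\cdot,\partial E)$. Second variation with the cutoffs from (B2) as test functions supplies stability up to the same error. The delicate point is to choose $\tau_j$ large enough for (B1), yet small enough that the weighted error $\tau_j\int_{F_j\triangle E}\operatorname{dist}$, and its boundary analogue, tends to zero. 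I would first try a diagonal choice, with $\tau_j$ growing slowly compared to the rate at which $\mathcal F_j(F_j)-\inf$ converges, and extract a subsequence.
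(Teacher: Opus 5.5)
\textbf{There is a genuine gap, and it comes from your choice of functional: it makes (B3) unreachable.}

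\emph{Why the pinning cannot vanish.} With the linear volume term, $E$ is not a minimizer of $P_H-2n|\cdot|$. By \eqref{eq:target-scale}, $I'(v_0)=2n$. Since $I(v)=P_H(E)(v/v_0)^{(Q-1)/Q}$ is concave, $v\mapsto I(v)-2nv$ attains its \emph{maximum} at $v_0$. Concretely, small dilations of $E$ in either direction lower $P_H-2n|\cdot|$ at second order, and the empty set already gives $0<P_H(E)/Q=P_H(E)-2n|E|$. So the unpinned functional does not select $E$, and your pinning strength cannot tend to zero. That is why you were led to take $\tau_j\to\infty$.

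\emph{Why this breaks (B3).} The pinning enters the Euler--Lagrange equation. Your minimizers have prescribed curvature
\[
H_j=2n-\tau_j s_E,
\]
where $s_E$ is the signed $\gOne$-distance to $\partial E$. Nothing is known about $\partial E$, so $s_E$ is only Lipschitz, and $|\nabla s_E|=1$ almost everywhere. Hence $H_j\notin C^2(U)$ in general, and $\|H_j-2n\|_{C^1}\gtrsim\tau_j\to\infty$. The estimate \eqref{eq:quoted-centre-estimate} needs three things:
\begin{itemize}
\item $H_j\in C^2(U)$ with $\eps_j^2+\|H_j-2n\|_{C^2(U)}$ below a fixed threshold and tending to zero;
\item exact stationarity;
\item exact nonnegativity of the Hessian on zero-volume normal speeds.
\end{itemize}
It is not a bound by an integrated weighted pinning error or a stability defect. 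So the diagonal balancing of $\tau_j$ you propose cannot produce (B3). A bounded, nonvanishing pinning fails in the same way, since then $H_j-2n\not\to0$.

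\textbf{The missing idea} is to replace $-2n|F|$ by $-I(|F|)+(|F|-v_0)^2$, as in Lemma~\ref{lem:selection-confinement}.
\begin{itemize}
\item Global minimality and dilation give $P_H(F)\ge I(|F|)$. So the unpinned functional is nonnegative and vanishes at $E$. The pinning then only has to pick $E$ among minimizers, and it may vanish: take $\tau_j=2^{-j}$.
\item Use a smooth pinning density $\psi_E\in C_c^\infty$ with $\{\psi_E>0\}=E$ in place of a distance function.
\item Choose recovery sets and $\eps_j$ so that $\mathfrak G_j(\widetilde E_j)\le2\tau_j^2$.
\item Then $\tau_j\mathscr P_E(F_j)\le2\tau_j^2$ gives $\mathscr P_E(F_j)\le2\tau_j\to0$. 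Together with $|v_j-v_0|\le\sqrt2\,\tau_j$ and \eqref{sel:eq:indicator-pinning}, this yields $L^1$ and then strict convergence.
\item The prescribed curvature $H_j=I'(v_j)-2(v_j-v_0)+\tau_j\psi_E$ is smooth, with $\|H_j-2n\|_{C^2(U)}\le C_E\tau_j\to0$.
\item The volume terms add $(2-I''(v_j))(\delta V)^2\ge0$ to the Hessian, which vanishes on zero-volume speeds. So the required stability holds exactly there.
\end{itemize}

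\textbf{Secondary points.} Your treatment of (B2) is broadly in line with Proposition~\ref{prop:joint-cutoffs}: codimension-two capacity cutoffs near the singular set, and two transverse coordinates near characteristic points. You also need confinement of $\partial F_j$ away from $\partial B$, so that all ambient variations are admissible; the paper gets this from a density estimate using deletion competitors. These points are minor next to the problem with (B3).
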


The selection in this section gives (B1). Regularity and cutoffs in
Section~\ref{sel:sec:capacity} give (B2), and the quantitative centre
estimate in Subsection~\ref{sec:centre-energy} gives (B3).

On a smooth boundary patch $\Sigma\subset\partial F$, a compactly
supported ambient field $W$ with boundary trace supported in $\Sigma$
has volume derivative
\[
 \delta V[W]=\int_\Sigma\langle W,N\rangle\dd\sigma_\Sigma.
\]

On the noncharacteristic regular part of $\partial F_j$, use the
subscript $j$ for these normal data, and write
$\sigma_{H,j}=|D_H\mathbf1_{F_j}|$ for horizontal perimeter.
First variations are integrated against the full reduced-boundary
measure $\Haus_{\gOne}^{2n}|_{\partial^*F_j}$.
For ambient functions, $C^2(U)$ uses the fixed coordinates $(z,t)$.

Choose a fixed $\psi_E\in C_c^\infty(\Hn)$ with
$0\le\psi_E\le1$ and $\{\psi_E>0\}=E$. To construct it, take
nonnegative smooth bumps $\beta_\ell$ supported in $E$ whose
positive sets cover $E$, and sum
$2^{-\ell}\beta_\ell/(1+\|\beta_\ell\|_{C^\ell})$.
The series converges in every $C^k$ norm on the ambient space;
its support is contained in the compact set $\overline E$.

Define the pinning functional
\begin{equation}\label{sel:eq:local-pinning}
 \mathscr P_E(F):=\int_{E\setminus F}\psi_E\dd x
 =\int_E\psi_E\dd x-\int_F\psi_E\dd x\ge0.
\end{equation}
It satisfies
$|\mathscr P_E(F)-\mathscr P_E(G)|\le|F\triangle G|$.

For any set $A$ of finite volume, with $v_0=|E|$,
$|A\triangle E|\le2|E\setminus A|+||A|-v_0|$.
For every $\delta>0$, split $E\setminus A$ at the level $\psi_E=\delta$:
\[
 \begin{aligned}
 |E\setminus A|
 &=|(E\setminus A)\cap\{\psi_E\le\delta\}|
   +|(E\setminus A)\cap\{\psi_E>\delta\}|\\
 &\le|E\cap\{\psi_E\le\delta\}|
       +\delta^{-1}\int_{E\setminus A}\psi_E\dd x.
 \end{aligned}
\]
Consequently,
\begin{equation}\label{sel:eq:indicator-pinning}
 |A\triangle E|
 \le2|E\cap\{\psi_E\le\delta\}|
       +2\delta^{-1}\mathscr P_E(A)+||A|-v_0|.
\end{equation}
If $|A_j|\to v_0$ and $\mathscr P_E(A_j)\to0$, then
$\mathbf1_{A_j}\to\mathbf1_E$ globally in $L^1$:
first let $j\to\infty$ at fixed $\delta$, then $\delta\downarrow0$,
using $\psi_E>0$ on $E$.

\subsection{Selection, confinement, and stability}
\label{sec:volume-repair-selection}

Set $v_0=|E|$. Dilation and the global
minimality of $E$ determine the isoperimetric profile at every volume:
\begin{equation}\label{sel:tag-2-11}
 I(v):=\inf_{|F|=v}P_H(F)
 =P_H(E)(v/v_0)^{(Q-1)/Q}\quad(v>0),\qquad I(0)=0.
\end{equation}
Indeed, scaling a competitor of volume $v$ to volume $v_0$ gives
the lower bound, and a dilation of $E$ attains it.
The coefficient in \eqref{sel:tag-2-11} is determined by $E$;
its identification with $C_n$ follows from the classification.
In particular, $I'(v_0)=2n$.

On a smooth oriented patch $\Sigma\subset\partial F$, define the
outward $\eps$-mean curvature by
\[
 \delta P_\eps(F)[X]=\int_\Sigma H_\eps\langle X,N\rangle
                       \dd\sigma_\Sigma,
\]
for ambient fields $X$ whose support meets $\partial F$ in a compact
subset of $\Sigma$.

The term $P_{\eps_j}(F)-I(|F|)$ controls the perimeter deficit.
The quadratic volume term selects $v_0$ without imposing $|F|=v_0$.
The pinning term selects the prescribed minimizer $E$ among sets
whose volumes approach $v_0$.

\begin{lemma}
\ifcsname r@lem:selection-confinement\endcsname%
\bookmark[level=3,dest={\getrefbykeydefault{lem:selection-confinement}{anchor}{}}]%
  {Lemma \ref*{lem:selection-confinement}: Selection and confinement}%
\fi
\label{lem:selection-confinement}
Let $E$ be as in Proposition~\ref{prop:normalized-approximants},
put $v_0=|E|$, and fix $\tau_j=2^{-j}$.
There is a bounded open Euclidean ball $\mathcal D$ such that,
for all sufficiently large $j$, one can choose
$0<\eps_j\le\tau_j$ decreasing to zero and minimizers
\[
 F_j\in\operatorname*{argmin}_{F\subset\mathcal D}\mathfrak G_j(F),
 \qquad
 \mathfrak G_j(F):=
 P_{\eps_j}(F)-I(|F|)
 +(|F|-v_0)^2+\tau_j\mathscr P_E(F).
\]
$P_{\eps_j}(F)$ is computed on all of $\Hn$, including any part of
the reduced boundary of $F$ on $\partial\mathcal D$. Put
$v_j=|F_j|$. Then
\[
 |v_j-v_0|\le\sqrt2\,\tau_j,\qquad
 P_{\eps_j}(F_j)-P_H(F_j)\le2\tau_j^2,\qquad
 \mathscr P_E(F_j)\le2\tau_j.
\]
Moreover, $\mathbf1_{F_j}\to\mathbf1_E$ strictly in $BV_H(\Hn)$.
The $F_j$ have representatives in a common closed Euclidean ball
$\mathbb B\Subset\mathcal D$ containing the supports of their Riemannian
perimeter measures.

Define the prescribed mean-curvature function
\begin{equation}\label{eq:prescribed-curvature}
 H_j:=I'(v_j)-2(v_j-v_0)+\tau_j\psi_E.
\end{equation}
On a fixed open set $\mathbb B\Subset U\Subset\mathcal D$,
$\|H_j-2n\|_{C^2(U)}\le C_E\tau_j\to0$.
The functional
$F\mapsto P_{\eps_j}(F)-\int_FH_j\dd x$ is stationary at $F_j$
for compactly supported smooth ambient variations. Its Hessian is
nonnegative for $\gOne$-normal speeds
$v=\langle W,N_j\rangle\in C_c^2(\Sigma_j^*)$ satisfying
$\int_{\Sigma_j}v\dd\sigma_j=0$, where $\Sigma_j^*$ is the
noncharacteristic regular boundary; the integral condition is the
linearized volume constraint. On the regular part,
$H_{\eps_j}=H_j$.
\end{lemma}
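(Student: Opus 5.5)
The plan has four parts: existence by the direct method, the quantitative bounds by comparing $F_j$ with $E$, confinement by a density argument, and the variational statements by first and second variation.

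\textbf{Existence.} Fix $\mathcal D$ a Euclidean ball containing a large neighbourhood of $\overline E$. Since $0\le I(|F|)\le C$ for $F\subset\mathcal D$, the functional $\mathfrak G_j$ is bounded below on subsets of $\mathcal D$. For $\eps_j>0$ we have $P_{\eps_j}\ge\eps_j P_1$, so a minimizing sequence has bounded Riemannian perimeter and is precompact in $L^1(\mathcal D)$. The integrand $\Phi_\eps$ is a convex, positively $1$-homogeneous norm, so $P_{\eps_j}$ is lower semicontinuous (Reshetnyak). Along the sequence, $|F|$, $\mathscr P_E$ and $I(|F|)$ are continuous. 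Hence a minimizer $F_j$ exists.

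\textbf{Quantitative bounds.} Choose $\eps_j\le\tau_j$ so small that $P_{\eps_j}(\widetilde E)-P_H(\widetilde E)\le\tau_j^2$ for some competitor $\widetilde E$ with $\mathscr P_E(\widetilde E)$ and $||\widetilde E|-v_0|$ of order $\tau_j^2$ or smaller.
\begin{itemize}
\item The competitor $\widetilde E$ comes from smooth approximation of $E$ in strict $BV_H$, for instance by mollified superlevel sets of $\mathbf 1_E$, with volume corrected by a dilation.
\item Then $\mathfrak G_j(F_j)\le\mathfrak G_j(\widetilde E)\le 2\tau_j^2$, say.
\item Each term of $\mathfrak G_j$ is nonnegative, because $P_{\eps}\ge P_H\ge I$. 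This yields $(v_j-v_0)^2\le2\tau_j^2$, the bound $P_{\eps_j}(F_j)-P_H(F_j)\le2\tau_j^2$, and $\tau_j\mathscr P_E(F_j)\le2\tau_j^2$.
\item By \eqref{sel:eq:indicator-pinning}, $\mathbf 1_{F_j}\to\mathbf 1_E$ in $L^1$.
\item Moreover $P_H(F_j)\le I(v_j)+2\tau_j^2\to P_H(E)$. Lower semicontinuity of $P_H$ then gives strict convergence.
\end{itemize}

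\textbf{Confinement.} This is the main obstacle: showing that $F_j$ lies in a fixed ball $\mathbb B\Subset\mathcal D$ uniformly in $j$. I would argue via uniform lower density estimates. Note that $F_j$ is an almost-minimizer of $P_{\eps_j}$ with volume penalty of bounded Lipschitz constant, since $I$ is Lipschitz near $v_0$ and $\psi_E$ is bounded. Comparing $F_j$ with $F_j\setminus B(x,r)$ in the $P_H$ sense, using $P_{\eps_j}\le P_H+\eps_j P_1$, controls $|F_j\cap B(x,r)|$ from below by $cr^Q$ at boundary points. The bookkeeping of the $\eps_j$ term is delicate here, and I would instead use the $P_{\eps}$-isoperimetric inequality with constants uniform in $\eps$. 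Since $|F_j\setminus N_\delta(E)|\to0$, points of $F_j$ far from $E$ would carry mass at least $cr^Q$, which is a contradiction. Hence $F_j$ lies in a fixed neighbourhood $\mathbb B$ for large $j$. This also shows that the constraint $\partial\mathcal D$ is inactive.

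\textbf{Variational statements.} Because $F_j\subset\mathbb B\Subset\mathcal D$, all small ambient variations are admissible. Differentiating $\mathfrak G_j$ gives stationarity of $P_{\eps_j}(F)-\int_F H_j$, using:
\begin{itemize}
\item $\frac d{ds}I(|F_s|)=I'(v_j)\,\delta V$;
\item $\frac d{ds}(|F_s|-v_0)^2=2(v_j-v_0)\,\delta V$;
\item $\frac d{ds}\mathscr P_E(F_s)=-\int\psi_E\langle W,N\rangle$.
\end{itemize}
The second variation is nonnegative for volume-preserving normal speeds. For these speeds, the $I$ term and the quadratic term contribute only through second-order volume changes, and those vanish after correcting with a flow that keeps volume exactly fixed. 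What remains is the Hessian of $P_{\eps_j}-\int H_j$.

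Elliptic regularity of $\Phi_\eps$-minimizers gives $C^2$ regularity of the regular part, and there $H_{\eps_j}=H_j$. Finally, $I'(v)=\frac{Q-1}{Q}P_H(E)v_0^{-1/Q}v^{-1/Q}$ is smooth near $v_0$ with $I'(v_0)=2n$. Together with $|v_j-v_0|\le\sqrt2\tau_j$ and $\|\tau_j\psi_E\|_{C^2}\le C\tau_j$, this gives $\|H_j-2n\|_{C^2(U)}\le C_E\tau_j$.
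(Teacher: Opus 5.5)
Your proposal is correct and follows essentially the paper's route. The paper also uses a strict $BV_H$ recovery set with $\eps_j$ chosen afterwards so that $\mathfrak G_j(\widetilde E_j)\le2\tau_j^2$, the direct method, and termwise nonnegativity for the three bounds and strict convergence. Confinement likewise comes from a uniform lower volume-density estimate for deletion competitors, and the variational statements from first and second variation once $\partial\mathcal D$ is inactive.

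The step you call delicate is settled in the paper by using $\gOne$-metric balls. There $\Phi_{\eps_j}\le1$ and coarea bound the created perimeter by $m'(r)$, while $P_{\eps_j}\ge P_H\ge I$ gives the isoperimetric bound uniformly in $j$. The paper also gets the Hessian statement directly from the identity $\delta^2\mathfrak G_j=\delta^2\bigl(P_{\eps_j}-\int_FH_j\dd x\bigr)+(2-I''(v_j))(\delta V)^2$, rather than from a volume-preserving correction. Finally, your formula for $I'$ should carry $v_0^{-(Q-1)/Q}$, which is what gives $I'(v_0)=2n$.
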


\begin{proof}
Smooth recovery sets determine $\eps_j$ so that elliptic regularization
costs at most $\tau_j^2$. For a nonnegative smooth approximate
identity $\rho_\delta$ of
integral one and shrinking compact support, use left group convolution
\begin{equation}\label{eq:left-convolution}
 (\rho_\delta*f)(x)=\int_{\Hn}\rho_\delta(y)f(y^{-1}\cdot x)\dd y.
\end{equation}
For every left-invariant horizontal field $Z$,
$Z(\rho_\delta*f)=\rho_\delta*(Zf)$ in distributions: the curve
$x\exp(sZ)$ becomes $y^{-1}x\exp(sZ)$ under left translation.
For a vector-valued finite measure $\mu$, positivity of the kernel
and invariance of Haar measure give
\begin{equation}\label{eq:convolution-contraction}
 \int_{\Hn}|\rho_\delta*\mu|\dd x\le|\mu|(\Hn).
\end{equation}
Take $u_\delta=\rho_\delta*\mathbf1_E$, so $0\le u_\delta\le1$.
Approximation in $L^1$, the contraction above and lower
semicontinuity give $u_\delta\to\mathbf1_E$ in $L^1$
and $\int_{\Hn}|\nabla_Hu_\delta|\dd x\to P_H(E)$
\cite[Theorems~2.4 and~3.1]{AGM2015}.
Euclidean coarea~\cite[Chapter~13]{Maggi} with weight
$|\nabla_Hu_\delta|/|\nabla u_\delta|$ gives
\[
 \int_0^1P_H(\{u_\delta>s\})\dd s
 =\int_{\Hn}|\nabla_Hu_\delta|\dd x.
\]
Together with $L^1$ convergence and lower semicontinuity,
this selects regular levels with smooth bounded sets $G_k$ and
$\mathbf1_{G_k}\to\mathbf1_E$ strictly in $BV_H(\Hn)$.
Mollifiers with shrinking compact support confine all positive
level sets to a fixed compact neighbourhood of $\overline E$.
Choose a bounded open Euclidean ball $\mathcal D$ containing this
neighbourhood, and a closed concentric ball $\mathbb B\Subset\mathcal D$
with $\overline E\subset\operatorname{int}\mathbb B$.
Since $|G_k|\to v_0$,
\[
 P_H(G_k)-I(|G_k|)\to0,\qquad
 (|G_k|-v_0)^2\to0,\qquad
 \mathscr P_E(G_k)\to0.
\]
Choose a subsequence, denoted $\widetilde E_j$, so that
the sum of these three nonnegative terms, with the last weighted
by $\tau_j$, is at most $\tau_j^2$. Each recovery set has finite
Riemannian perimeter. Recursively choose
$0<\eps_j\le\min\{\tau_j,\eps_{j-1}/2\}$, with $\eps_0=1$,
so that $\eps_jP_1(\widetilde E_j)\le\tau_j^2$.
Since $0\le P_{\eps_j}-P_H\le\eps_jP_1$,
\begin{equation}\label{sel:tag-2-13}
 \mathfrak G_j(\widetilde E_j)\le2\tau_j^2.
\end{equation}

The same comparison selects minimizers converging strictly to $E$.
For every $F\subset\mathcal D$ of finite Riemannian perimeter,
\[
 \mathfrak G_j(F)=
 \bigl(P_H(F)-I(|F|)\bigr)
 +\bigl(P_{\eps_j}(F)-P_H(F)\bigr)
 +(|F|-v_0)^2
 +\tau_j\mathscr P_E(F)\ge0.
\]
Since $0<\eps_j\le1$ and $F\subset\mathcal D$,
\[
 \eps_jP_1(F)\le P_{\eps_j}(F)
 \le\mathfrak G_j(F)+I(|\mathcal D|).
\]
Thus a minimizing sequence has bounded $P_1$ at fixed $j$.
Extend the sets by zero to a larger ball. $BV$ compactness
\cite[Section~12.4]{Maggi} gives
an $L^1$ limit contained in $\mathcal D$ up to a null set.
Lower semicontinuity of $P_{\eps_j}$ and continuity of the volume,
$I$, and pinning terms show that the limit is a minimizer $F_j$.
Comparison with \eqref{sel:tag-2-13} yields
\begin{equation}\label{sel:tag-2-28}
 \bigl(P_H(F_j)-I(v_j)\bigr)+\bigl(P_{\eps_j}(F_j)-P_H(F_j)\bigr)
 +(v_j-v_0)^2
 +\tau_j\mathscr P_E(F_j)
 \le2\tau_j^2.
\end{equation}
Thus $|v_j-v_0|\le\sqrt2\,\tau_j$,
$P_{\eps_j}(F_j)-P_H(F_j)\le2\tau_j^2$, and $\mathscr P_E(F_j)\le2\tau_j$.
Equation~\eqref{sel:eq:indicator-pinning} gives global $L^1$
convergence to $E$. The perimeter bounds are
\[
 I(v_j)\le P_H(F_j)\le P_{\eps_j}(F_j)
 \le I(v_j)+2\tau_j^2\longrightarrow I(v_0)=P_H(E).
\]

To confine their boundary supports, deletion competitors give a
local volume lower bound independent of $\eps_j$.
For all sufficiently large $j$, the volume bound gives $v_j\ge v_0/2$.
Since $0<(Q-1)/Q<1$, one has
$(w/v_j)^{(Q-1)/Q}\ge w/v_j$ for $0\le w\le v_j$; for
$w\ge v_j$, $I(v_j)-I(w)\le0$. Hence
\[
 I(v_j)-I(w)\le\frac{I(v_j)}{v_j}(v_j-w)_+
 \le C|v_j-w|\qquad(w\ge0),
\]
with $C$ independent of $j$. For $w\in[0,|\mathcal D|]$,
the quadratic difference has absolute value
$|v_j-w|\,|v_j+w-2v_0|\le C|v_j-w|$,
and $\mathscr P_E$ is $1$-Lipschitz in symmetric difference.
Minimizing $\mathfrak G_j$ over $F\subset\mathcal D$ therefore gives,
for every $G\subset\mathcal D$,
\begin{equation}\label{sel:eq:global-set-comparison}
 P_{\eps_j}(F_j)\le P_{\eps_j}(G)+C|F_j\triangle G|.
\end{equation}

Write $d_{\gOne}$ for the distance of $\gOne$ and
$B^{\gOne}_r(x)$ for its metric balls. Since
$\overline E\subset\operatorname{int}\mathbb B$,
choose $0<r_*<1$ with
$r_*<\tfrac12 d_{\gOne}(\overline E,\partial\mathbb B)$.
Since $F_j\setminus B_r^{\gOne}(x)\subset\mathcal D$,
\eqref{sel:eq:global-set-comparison} applies to this deletion competitor.

Let $D$ denote the Euclidean distributional derivative.
Fix $x\in\spt|D\mathbf1_{F_j}|$ and put
\[
 \begin{aligned}
 m(r)&=|F_j\cap B^{\gOne}_r(x)|,\\
 h_j(r)&=P_{\eps_j}(F_j\cap B^{\gOne}_r(x);\partial B^{\gOne}_r(x)).
 \end{aligned}
\]
For almost every $r<r_*$, $h_j(r)$ is the perimeter created by
the cut. Since $\Phi_{\eps_j}\le1$ on unit normals, coarea
\cite[Chapter~13]{Maggi} gives
$h_j(r)\le m'(r)$.
The deletion comparison \eqref{sel:eq:global-set-comparison} and
the profile bound \eqref{sel:tag-2-11} for \(F_j\cap B^{\gOne}_r(x)\)
give
\[
 \begin{split}
 P_{\eps_j}(F_j;B^{\gOne}_r(x))
 &\le h_j(r)+Cm(r),\\
 I(m(r))\le P_{\eps_j}(F_j\cap B^{\gOne}_r(x))
 &\le2h_j(r)+Cm(r).
 \end{split}
\]
Since $I(v)$ is a positive constant times $v^{(Q-1)/Q}$,
\[
 m(r)^{(Q-1)/Q}\le C\{m'(r)+m(r)\}.
\]
Boundary support gives $0<m(r)\le|B^{\gOne}_r|$.
Decrease $r_*$ so that $C|B^{\gOne}_{r_*}|^{1/Q}\le1/2$;
left invariance makes the choice independent of $x,j$.
Then $Cm(r)\le\tfrac12m(r)^{(Q-1)/Q}$, so
\[
 m'(r)\ge\frac1{2C}m(r)^{(Q-1)/Q},\qquad
 (m(r)^{1/Q})'=\frac1Qm(r)^{-(Q-1)/Q}m'(r)
       \ge\frac1{2CQ}
\]
almost everywhere. On each interval $[s,r_*]$ with $0<s<r_*$,
$m$ is absolutely continuous and bounded away from zero, so
integration gives $m(r_*)^{1/Q}-m(s)^{1/Q}\ge(r_*-s)/(2CQ)$.
Letting $s\downarrow0$, using $m(s)\le|B^{\gOne}_s|\to0$,
gives, with $c_*=(2CQ)^{-Q}$,
\begin{equation}
|F_j\cap B^{\gOne}_{r_*}(x)|\ge c_*r_*^Q
 \quad(x\in\spt|D\mathbf1_{F_j}|).
\label{sel:tag-2-32}
\end{equation}

If a boundary point $x$ has
$d_{\gOne}(x,\overline E)>r_*$, its radius-$r_*$ ball misses $E$, so
\eqref{sel:tag-2-32} gives
\[
 |F_j\triangle E|\ge|F_j\cap B^{\gOne}_{r_*}(x)|\ge c_*r_*^Q.
\]
Global $L^1$ convergence therefore gives, for all sufficiently large $j$,
\begin{equation}\label{sel:tag-2-33}
 \spt|D\mathbf1_{F_j}|
 \subset\{x:d_{\gOne}(x,\overline E)\le r_*\}\subset\mathbb B\Subset\mathcal D.
\end{equation}
The second inclusion follows from the choice of $r_*$.
Because $\mathbb B$ is a Euclidean ball, $\Hn\setminus\mathbb B$ is connected.
The distributional derivative of $\mathbf1_{F_j}$ vanishes there,
so $\mathbf1_{F_j}$ is constant; finite volume makes that constant
zero. Thus representatives of all $F_j$ lie in $\mathbb B$.

For $F_j\triangle G\Subset B^{\gOne}_r(x)\Subset\mathcal D$, $r<r_*$,
the competitor remains in $\mathcal D$ and locality gives
\begin{equation}
P_{\eps_j}(F_j;B^{\gOne}_r(x))
 \le P_{\eps_j}(G;B^{\gOne}_r(x))+C|F_j\triangle G|.
\label{sel:tag-2-30}
\end{equation}
Confinement makes every compactly supported smooth ambient flow
admissible for sufficiently small time, so the first variation
determines $H_j$.
Under such a flow, the transformed reduced-boundary normal and surface
Jacobian depend smoothly on time, with bounded derivatives for small
times. At fixed $j$, smoothness of $\Phi_{\eps_j}$ on unit normals
and $P_1(F_j)<\infty$ justify differentiation under the perimeter integral.

By \eqref{sel:tag-2-28}, smoothness of $I'$ near $v_0$, and
\eqref{eq:prescribed-curvature},
$\|H_j-2n\|_{C^2(U)}\le C_E\tau_j$.
Differentiating $\mathfrak G_j$ at $F_j$ gives
\begin{equation}\label{sel:tag-2-36}
 \delta P_{\eps_j}(F_j)[X]
 -\int_{\partial^*F_j}H_j
       \langle X,N_j\rangle\dd\Haus_{\gOne}^{2n}=0.
\end{equation}
On a regular boundary patch, \eqref{sel:tag-2-36} implies
$H_{\eps_j}=H_j$.

The second variation gives stability under the linearized volume
constraint. For a $C_c^2$ $\gOne$-normal variation on $\Sigma_j^*$,
keep $H_j$ fixed. Its constant part $I'(v_j)-2(v_j-v_0)$ cancels the
terms containing $\delta^2V$, giving
\[
 \delta^2\mathfrak G_j
 =\delta^2\left(P_{\eps_j}-\int_FH_j\dd x\right)
   +(2-I''(v_j))(\delta V)^2.
\]
If $\delta V=0$, minimality of $\mathfrak G_j$ makes the
prescribed-curvature Hessian nonnegative.
Both second variations exist: $\Phi_{\eps_j}$ is smooth on unit
normals at fixed $j$, and $H_j$ is smooth.
\end{proof}

\section{Regularity, cutoffs, and vanishing centre variation}
\label{sel:sec:capacity}

Let $F_j$ be the sets selected in Section~\ref{sec:approximation}.
Retain the fixed open set $\mathbb B\Subset U\Subset\mathcal D$.
Choose representatives with $\partial F_j=\spt|D\mathbf1_{F_j}|$.
Let $\Sigma_j$ consist of points where $\partial F_j$ is locally
a $C^1$ graph, put $S_j=\partial F_j\setminus\Sigma_j$, and set
$m=2n$ and $\sigma_j=\Haus_{\gOne}^{m}|_{\Sigma_j}$.
Characteristic points in $\Sigma_j$ satisfy $N_{b,j}=0$.
Regularity and cutoff limits are taken at fixed $j$; their constants
may depend on $j$.

\Needspace{7\baselineskip}
\subsection{Boundary regularity and local area bounds}

\begin{proposition}
\ifcsname r@prop:selected-regularity\endcsname%
\bookmark[level=3,dest={\getrefbykeydefault{prop:selected-regularity}{anchor}{}}]%
  {Proposition \ref*{prop:selected-regularity}: Regular boundary and local area bound}%
\fi
\label{prop:selected-regularity}
For each fixed $j$, $\Sigma_j$ is a smooth outward-oriented
hypersurface carrying the full Riemannian perimeter of $F_j$.
The singular set $S_j$ is compact, is empty when
$m+1\le7$, and otherwise has Hausdorff dimension at most $m-7$.
In particular,
\begin{equation}\label{sel:tag-3-2}
 \Haus^{m-2}(S_j)=0.
\end{equation}
There are $c_j,r_j>0$ such that
\begin{equation}\label{sel:eq:area-upper}
 \sigma_j(B_r(x))\le c_jr^m
 \qquad(x\in\partial F_j,\ 0<r<r_j),
\end{equation}
where the balls are Euclidean. Smoothness includes regular
characteristic points.
\end{proposition}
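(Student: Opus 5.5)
At fixed $j$ we view $F_j$ as an almost-minimizer of a uniformly elliptic parametric integrand and import the classical regularity theory for such integrands. Written in the fixed coordinates $(z,t)$, the functional $P_{\eps_j}$ is
\[
 \int_{\partial^*F}\Phi_{\eps_j}(N_F)\dd\Haus_{\gOne}^{2n}
 =\int_{\partial^*F}\Psi_j(x,\nu^{\mathrm{Eu}}_F)\dd\Haus^{2n}_{\mathrm{Eu}}.
\]
The Euclidean integrand $\Psi_j(x,\cdot)$ is the one-homogeneous extension obtained by changing from the $\gOne$-orthonormal frame $X_k,Y_k,T$ to the coordinate frame. That change of frame is a smooth, $x$-dependent linear isomorphism, with bounds uniform on the compact set $\overline{\mathcal D}$.

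The first step is to check the hypotheses of the regularity theory. Since $\eps_j>0$, the map $N\mapsto\Phi_{\eps_j}(N)$ on $\gOne$-unit normals is the restriction of the norm $\sqrt{|N_b|^2+\eps_j^2N_T^2}$, which comes from a positive definite quadratic form. Its homogeneous extension is therefore smooth away from zero and uniformly elliptic, in the sense that $\Psi_j(x,\cdot)-c_j|\cdot|$ is convex for some $c_j>0$. Composing with the smooth linear frame change keeps $\Psi_j$ smooth in $(x,\nu)$ and preserves this ellipticity with constants depending on $j$. The comparison inequality \eqref{sel:tag-2-30} then shows that $F_j$ is a $(\Lambda_j,r_*)$-almost minimizer of $\Psi_j$ in $\mathcal D$ with a volume error. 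This error is $C|F_j\triangle G|\le C\,r^{2n+1}$ in Euclidean balls of radius $r$, so it fits the standard almost-minimality framework.

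The second step is to apply Almgren--Schoen--Simon / Bombieri type regularity for elliptic integrands, in the form given for almost-minimizers in Duzaar--Steffen or Schoen--Simon--Almgren. Together with Federer's dimension reduction, this gives a relatively open set $\Sigma_j\subset\partial F_j$ that is a $C^{1,\alpha}$ hypersurface, with singular set of $\Haus^{m-2}$-measure zero. The classical results only give $\Haus^{m-2}(S_j)=0$ for general elliptic integrands. The sharper dimension bound $m-7$, and emptiness when $m+1\le7$, use that $\Psi_j(x,\cdot)$ is the norm of a quadratic form, so it is Euclidean area after a linear change of variables. Freezing $x$, every blow-up of $F_j$ is a minimizing cone for an integrand that a linear map transforms into area. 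Simons' classification then rules out singular cones in ambient dimension at most $7$, and Federer reduction gives $\dim S_j\le m-7$; in particular \eqref{sel:tag-3-2} holds. This reduction to area via blow-ups is the step I expect to be the main obstacle. It requires checking that the blow-up limits are genuinely minimizing for the frozen integrand, since the volume and pinning terms scale away, and that the upper semicontinuity needed for dimension reduction holds for almost-minimizers. If it fails, $\Haus^{m-2}(S_j)=0$ from the general theory would still suffice for what follows.

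The remaining steps are routine. Higher regularity comes from the first variation \eqref{sel:tag-2-36} with $H_{\eps_j}=H_j$ smooth. Writing $\Sigma_j$ locally as a graph, the equation becomes a quasilinear uniformly elliptic divergence-form PDE with smooth coefficients. Schauder bootstrapping upgrades $C^{1,\alpha}$ to $C^\infty$, and this applies at characteristic points too, since ellipticity of $\Phi_{\eps_j}$ does not see whether $N_b$ vanishes. $S_j$ is closed in $\partial F_j$, which is compact by \eqref{sel:tag-2-33}, so $S_j$ is compact. Since $\Haus^{m}(S_j)=0$, the perimeter measure, which lives on $\partial^*F_j\subset\partial F_j$, is carried by $\Sigma_j$. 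Finally, for the upper area bound \eqref{sel:eq:area-upper}, compare $F_j$ with $G=F_j\cup B_r(x)$ in \eqref{sel:tag-2-30} for small $r$. This gives $P_{\eps_j}(F_j;B_r(x))\le C\Haus^{m}(\partial B_r)+Cr^{m+1}$. Since $\Phi_{\eps_j}\ge\eps_j$ on unit normals and $\gOne$ is comparable to the Euclidean metric on $\mathcal D$, we get $\sigma_j(B_r(x))\le c_jr^m$, with $c_j$ of order $\eps_j^{-1}$.
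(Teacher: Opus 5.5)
Your argument is correct and follows the paper's skeleton:
\begin{itemize}
\item write $P_{\eps_j}$ as the Euclidean integrand $\sqrt{\nu^\top M_j(x)\nu}$;
\item get almost-minimality with error $O(r^{m+1})$ from the volume-penalized comparison;
\item invoke partial regularity;
\item bootstrap the graph Euler--Lagrange equation to smoothness, characteristic points included.
\end{itemize}
The difference is in how you obtain the partial regularity and the area bound.

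The paper extends $M_j$ outside $U$ to a uniformly positive definite Lipschitz field equal to the identity at infinity. It then cites one result, Simmons's theorem for almost-minimizers of square roots of quadratic forms with H\"older coefficients. That theorem gives at once the relatively open $C^{1,1/8}$ regular part, emptiness of $S_j$ for $m+1\le7$, and $\dim S_j\le m-7$; his Lemma~4.4 gives $c_jr^m$.

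You instead take only $\Haus^{m-2}(S_j)=0$ from general elliptic-integrand theory. You recover the sharp bound by blowing up, freezing coefficients, and using that a constant quadratic integrand is linearly equivalent to area. You prove the area bound directly with a cup competitor and $\Phi_{\eps_j}\ge\eps_j$. Your route is more hands-on; the paper's is a single citation tailored to this structure.

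The step you call the main obstacle is in fact classical. Let $g^\eps$ be the smooth left-invariant metric in which $X_k,Y_k,\eps T$ are orthonormal. Its volume is $\eps^{-1}$ times Haar measure, and testing with $g^\eps$-unit fields gives $P_\eps(F)=\eps\,P_{g^\eps}(F)$. So $F_j$ is an almost-minimizer of Riemannian perimeter in a smooth manifold, and the De Giorgi--Federer--Simons theory applies directly.

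Three small points.
\begin{itemize}
\item Calling the blow-ups \emph{cones} requires an (almost-)monotonicity formula. It is available here through the Riemannian structure, but your sketch does not invoke it. For the dimension count it is enough that blow-ups minimize the frozen integrand, together with upper semicontinuity of singular sets under convergence and Federer's bound for area-minimizing boundaries.
\item With $G=F_j\cup B_r(x)$, the symmetric difference is not compactly contained in $B_r(x)$, so \eqref{sel:tag-2-30} does not apply as stated. Use the global comparison \eqref{sel:eq:global-set-comparison}, or balls $B_s(x)$ with $s<r$.
\item Your $\Haus^{m-2}$ fallback would suffice for the later cutoff construction. It would not give the emptiness and $m-7$ claims of the statement itself.
\end{itemize}
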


\begin{proof}
Let $\mathsf B$ have the left-invariant frame as columns, so $\det\mathsf B=1$.
For the outward Euclidean unit normal $\nu_F^{\rm e}$ on $\partial^*F$,
the frame coefficients and surface measure satisfy
\[
 (N_{b,F},N_{T,F})=\frac{\mathsf B^\top\nu_F^{\rm e}}{|\mathsf B^\top\nu_F^{\rm e}|},
 \qquad
 d\Haus_{\gOne}^m=|\mathsf B^\top\nu_F^{\rm e}|\,d\Haus^m.
\]
Set $M_j=\mathsf B\operatorname{diag}(I_{2n},\eps_j^2)\mathsf B^\top$.
The normalization factors cancel in $P_{\eps_j}$:
\begin{equation}\label{sel:eq:elliptic-metric}
 P_{\eps_j}(F;U)=\int_{U\cap\partial^*F}
 \sqrt{(\nu_F^{\rm e})^\top M_j(x)\nu_F^{\rm e}}\dd\Haus^m(x).
\end{equation}

The matrix $M_j$ is smooth and positive definite near $\overline U$.
A smooth compactly supported cutoff $0\le\chi\le1$, equal to one there,
gives the uniformly positive definite extension
$\widetilde M_j=\chi M_j+(1-\chi)I$, which equals $I$ outside
a compact set and has bounded first derivatives. In particular
it is $C^{0,1/2}$.

For $F_j\triangle G\Subset U\cap B_r(x)$ and $r\le1$,
\eqref{sel:eq:global-set-comparison} gives an additive error
at most $c_j|F_j\triangle G|\le c_jr^{m+1}\le c_jr^{m+1/2}$.
The extension agrees with $M_j$ on the modification region, so
Simmons's almost-minimality condition holds in ambient dimension
$m+1$, with exponent $1/2$.
By \cite[Theorem~1.1]{Simmons2022}, the reduced boundary is relatively
open in $\partial F_j$ and is $C^{1,1/8}$, and the stated singular-set
bounds follow. The singular set is closed in the compact boundary
support. Lemma~4.4 of the same source gives the Euclidean perimeter
bound $c_jr^m$. Compactness of the support inside $U$ permits one
radius $r_j$; comparison of Euclidean and $\gOne$ surface measure
on $\overline U$ gives \eqref{sel:eq:area-upper}.

To improve a regular patch, choose orthonormal affine Euclidean
coordinates $(y,s)$ in which the surface is $s=w(y)$,
$y\in V_g\subset\mathbb R^m$, with the set below the graph.
Here $w\in C^{1,\beta}$, $\beta=1/8$, and Haar volume is $dy\,ds$.
Shrink the chart so that $|Dw|\le L$.
In these coordinates the graph density is
\begin{equation}\label{sel:eq:graph-density}
 \Psi_j(y,s,q)=\sqrt{\zeta(q)^\top M_j(y,s)\zeta(q)},
 \qquad \zeta(q)=(-q,1).
\end{equation}
For $\widehat h=(-h,0)$, $M=M_j(y,s)$ and $\Psi=\Psi_j(y,s,q)$,
\[
 D^2_{qq}\Psi[h,h]
 =\frac{|M^{1/2}\widehat h\wedge M^{1/2}\zeta|^2}{\Psi^3}>0
 \qquad(h\ne0).
\]
The two vectors are independent, and $M$ is positive definite.
Compactness of the graph subcylinder and $|q|\le L$ give constants
$0<\lambda_j\le\Lambda_j<\infty$ such that
\begin{equation}\label{sel:eq:graph-ellipticity}
 \lambda_j|h|^2\le D^2_{qq}\Psi_j[h,h]\le\Lambda_j|h|^2.
\end{equation}
These bounds include characteristic points, with $\eps_j>0$ fixed.

A compactly supported graph-height variation $\varphi$ has volume
derivative $\int_{V_g}\varphi\,dy$. Thus \eqref{sel:tag-2-36} gives,
in distributions,
\begin{equation}\label{sel:eq:graph-euler}
 \operatorname{div}_y\Psi_{j,q}(y,w,Dw)=b,
 \qquad b(y)=\Psi_{j,s}(y,w,Dw)-H_j(y,w(y)).
\end{equation}

For coordinate difference quotients
$v_h(y)=[w(y+he_i)-w(y)]/h$, which belong to $H^1$ on smaller
charts when $h\ne0$, \eqref{sel:eq:graph-euler} gives a linear
divergence-form equation with zero scalar forcing: indeed,
$\delta_hb=\partial_i\int_0^1b(y+the_i)\,dt$.
Since $v_h$ is an average of $\partial_iw$, the mean-value formula,
smoothness of $\Psi_j,H_j$, and \eqref{sel:eq:graph-ellipticity}
give ellipticity, $C^{0,\beta}$ bounds for the coefficients and
vector forcing, and an $L^2$ bound for $v_h$, uniformly in $h$
at fixed $j$.
With $p>m/(1-\beta)$, \cite[Theorem~6.2]{Vita2026} gives a uniform
local $C^{1,\beta}$ bound for $v_h$, hence $w\in C^{2,\beta}_{\rm loc}$.
Differentiating \eqref{sel:eq:graph-euler} and iterating
\cite[Theorem~6.3]{Vita2026} gives smoothness, including at
characteristic points. All regularity constants may depend on $j$.
Thus $\Sigma_j=\partial^*F_j$ carries all Riemannian perimeter.
\end{proof}

\needspace{7\baselineskip}
\subsection{Cutoffs avoiding singular and characteristic sets}

\begin{proposition}
\ifcsname r@prop:joint-cutoffs\endcsname%
\bookmark[level=3,dest={\getrefbykeydefault{prop:joint-cutoffs}{anchor}{}}]%
  {Proposition \ref*{prop:joint-cutoffs}: Cutoffs avoiding both exceptional sets}%
\fi
\label{prop:joint-cutoffs}
For each fixed $j$ there are $\eta_{j,\ell}\in C_c^1(\Sigma_j^*)$,
$0\le\eta_{j,\ell}\le1$, converging locally uniformly to one on
$\Sigma_j^*$ and $\sigma_j$-almost everywhere on $\Sigma_j$, with
\begin{equation}\label{sel:tag-3-2b}
 \int_{\Sigma_j}|\nabla_{\Sigma_j}\eta_{j,\ell}|^2\dd\sigma_j
 \longrightarrow0.
\end{equation}
The limit is taken as $\ell\to\infty$ with $j$ fixed.
\end{proposition}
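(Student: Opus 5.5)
The plan is to build the cutoffs as a product $\eta_{j,\ell}=\alpha_\ell\,\beta_\ell$. Here $\alpha_\ell$ is an ambient capacity cutoff vanishing near the compact singular set $S_j$, restricted to $\Sigma_j$. The factor $\beta_\ell$ vanishes near the characteristic set $C_j=\{x\in\Sigma_j:N_{b,j}=0\}$. Since $0\le\alpha_\ell,\beta_\ell\le1$,
\[
 |\nabla_{\Sigma_j}\eta_{j,\ell}|^2\le2|\nabla_{\Sigma_j}\alpha_\ell|^2+2|\nabla_{\Sigma_j}\beta_\ell|^2 ,
\]
so it suffices to make each Dirichlet energy small separately. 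Support compactness in $\Sigma_j^*$ follows once $\alpha_\ell$ vanishes on a neighbourhood of $S_j$ and $\beta_\ell$ vanishes on a neighbourhood of $C_j$: the support is then a closed subset of the compact set $\partial F_j$ avoiding $S_j\cup C_j$. Since $\sigma_j(C_j)$ may be positive a priori, I need convergence to one only off $C_j\cup S_j$ in the locally uniform sense. The $\sigma_j$-a.e.\ statement on $\Sigma_j$ then requires $\sigma_j(C_j)=0$, which I would check as a preliminary step. On the smooth hypersurface $\Sigma_j$, $N_{b,j}=0$ means $T\Sigma_j=\xi$ at that point. Because $\xi$ is bracket generating, the set where a hypersurface is tangent to $\xi$ has empty interior and in fact Hausdorff dimension at most $n$ (Balogh's estimate). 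That is below $m-2=2n-2$ precisely when $n\ge2$, which is where the hypothesis enters.

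For $\alpha_\ell$: by \eqref{sel:tag-3-2}, $\Haus^{m-2}(S_j)=0$, so for each $\ell$ I cover $S_j$ by finitely many Euclidean balls $B_{r_i}(x_i)$ with $r_i<\min\{r_j,1/\ell\}$ and $\sum_i r_i^{m-2}<1/\ell$. Compactness of $S_j$ makes the cover finite. For each ball I take the Lipschitz function $\phi_i$ equal to $0$ on $B_{r_i}$, to $1$ outside $B_{2r_i}$, with gradient at most $1/r_i$, and set $\alpha_\ell=\min_i\phi_i$, mollified if needed to be $C^1$. The area bound \eqref{sel:eq:area-upper} applied at a point of $\partial F_j$ near $x_i$ gives
\[
 \int_{\Sigma_j}|\nabla_{\Sigma_j}\alpha_\ell|^2\dd\sigma_j\le\sum_i r_i^{-2}\,\sigma_j(B_{2r_i}(x_i))\le C_j\sum_i r_i^{m-2}\to0 .
\]
Since the radii shrink, $\alpha_\ell\to1$ locally uniformly on $\Sigma_j$.

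For $\beta_\ell$ the same scheme runs intrinsically on $\Sigma_j$, which is smooth with locally Euclidean area bounds. $C_j$ is closed in $\Sigma_j$, and on compact subsets $K$ of $\Sigma_j$ we have $\Haus^{m-2}(C_j\cap K)=0$. Alternatively, use the capacity of $\{N_{b,j}=0\}$ directly: in the cutoff $\beta_\ell=\min\{1,\max\{0,\log(|N_{b,j}|\ell)/\log\ell\}\}$-type construction, one would need the level structure of $N_{b,j}$. I prefer the covering route. Exhaust $\Sigma_j\setminus S_j$ by compacts. Near $S_j$ the factor $\alpha_\ell$ already kills everything, so only $C_j\cap\{\alpha_\ell\ne0\}$ needs covering, and that set is compact. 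Cover it by small balls with $\sum r_i^{m-2}<1/\ell$ and build $\beta_\ell$ exactly as $\alpha_\ell$. To keep the product estimate valid, choose the $\beta$-cover after fixing $\alpha_\ell$, so both energies are $\le C_j/\ell$.

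The main obstacle is the dimension bound for $C_j$ at regular points. I would prove it locally: write $\Sigma_j$ near a characteristic point as a graph $t=u(z)$. The characteristic condition becomes $\nabla u(z)=(-y,x)$ (up to sign conventions), a smooth map equation. On this set, differentiating gives $D^2u=-J$ tangentially, contradicting symmetry of the Hessian wherever the set has a two-dimensional tangent piece spanning a $J$-pair. From this one concludes the set is contained in a finite union of $C^1$ submanifolds of dimension at most $n$. If this becomes delicate, the fallback is to cite the known bound $\dim_{\Haus}C(\Sigma)\le n$ for $C^{1,1}$ hypersurfaces in $\Hn$ (Balogh), which suffices since $n<2n-2$ for $n\ge3$. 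For $n=2$, where $n=m-2$, one needs the $\Haus^{n}$-$\sigma$-finiteness together with the finer fact that $C^1$ $n$-dimensional pieces have zero $(m-2)$-capacity in the logarithmic sense. In that borderline case I would use logarithmic cutoffs $\beta=\log(d/\ell^{-2})/\log\ell$ between distances $\ell^{-2}$ and $\ell^{-1}$ to a codimension-two submanifold, whose energy is $O(1/\log\ell)\to0$.
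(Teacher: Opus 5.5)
Your architecture matches the paper's. The cutoff is a product of a singular-set factor and a characteristic-set factor, and the energies are split by $|\nabla(\alpha\beta)|^2\le2|\nabla\alpha|^2+2|\nabla\beta|^2$. Your explicit covering construction of $\alpha_\ell$ from $\Haus^{m-2}(S_j)=0$ and the area bound is the Zhu-type argument the paper cites.

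The characteristic set is where you diverge, and there one step is wrong as written. The inequality $n<2n-2$ holds only for $n\ge3$, not for $n\ge2$. Balogh's bound $\dim C_j\le n$ is attained: the graph $t=\sum_k x_ky_k$ has characteristic set $\{x=0\}$. So for $n=2$ you cannot claim $\Haus^{m-2}(C_j\cap K)=0$, and the covering route for $\beta_\ell$ fails exactly in that case.

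Your final paragraph notices this. However, the logarithmic fallback needs $C_j$ to lie locally in a codimension-two submanifold, and neither of your two justifications supplies that:
\begin{itemize}
\item a dimension bound equal to $m-2$ does not by itself force zero $W^{1,2}$-capacity;
\item the Hessian-symmetry heuristic presupposes tangent planes to $C_j$.
\end{itemize}
The missing step is the paper's rank argument. Near a characteristic point, write $\Sigma_j$ as $t=f(z)$ and put $G=\nabla f+Jz$. Then $C_j=\{G=0\}$ locally, and $DG-DG^\top=2J$ gives $2n\le2\operatorname{rank}DG$. For $n\ge2$, two components $G_a,G_b$ therefore have independent differentials, so $C_j\subset\{G_a=G_b=0\}$, a smooth codimension-two submanifold. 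This also gives $\sigma_j(C_j)=0$ without citing Balogh.

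With that in hand, the paper does not split cases. It uses the single global function $g=|N_b|$, which is comparable to $|G|\ge|(G_a,G_b)|$. This yields the quadratic sublevel bound $\sigma_j(K\cap\{g<t\})\le c_{j,K}t^2$. The paper then takes $\zeta_\rho(g)$ with a logarithmic transition between $\rho^2$ and $\rho$, for every $n\ge2$. This construction:
\begin{itemize}
\item is automatically $C^1$;
\item needs no patching of distance functions to local submanifold pieces with boundary, since only sublevel measures are estimated chartwise and those add;
\item uses only two transverse coordinates.
\end{itemize}
Your case split treats both exceptional sets by one covering mechanism when $n\ge3$. But it still needs the logarithmic construction for $n=2$, so it adds an external theorem without removing any work. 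Finally, the hypothesis $n\ge2$ enters through codimension two, which is the borderline for vanishing Dirichlet energy, not through $n<2n-2$.
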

\begin{proof}
If $S_j$ is empty, $\Sigma_j$ is compact and we take
$\widetilde\eta_{j,\ell}=1$. Otherwise, apply the cutoff construction
of \cite[Section~2.1 and Lemma~2.1, $q=2$]{Zhu2016}, using
\eqref{sel:tag-3-2} and \eqref{sel:eq:area-upper}.
Its covering argument uses only compactness of $S_j$, its vanishing
$\Haus^{m-2}$ measure, and the local area bound. It therefore applies
in Euclidean balls here; comparison with the smooth metric $\gOne$
on the common compact support changes only the fixed-$j$ constant.
Choose covering radii below $1/\ell$ and with sum of their
$(m-2)$-powers below $1/\ell$. The resulting Lipschitz cutoffs
$0\le\theta_{j,\ell}\le1$ have compact support in $\Sigma_j$,
vanish near $S_j$, converge locally uniformly to one, and satisfy
\[
 \int_{\Sigma_j}|\nabla_{\Sigma_j}\theta_{j,\ell}|^2\dd\sigma_j
 \le c_j/\ell.
\]
Nonnegative local mollification in finitely many regular charts,
combined with a smooth nonnegative partition of unity, gives
$\widetilde\eta_{j,\ell}\in C_c^1(\Sigma_j)$ with values in $[0,1]$,
support away from $S_j$, and uniform and $W^{1,2}(\Sigma_j)$
errors less than $1/\ell$. Thus
\[
 \int_{\Sigma_j}|\nabla_{\Sigma_j}\widetilde\eta_{j,\ell}|^2\dd\sigma_j
 \le2\int_{\Sigma_j}|\nabla_{\Sigma_j}\theta_{j,\ell}|^2\dd\sigma_j
       +2/\ell^2\longrightarrow0.
\]
The uniform approximation preserves local uniform convergence to one.

Near a characteristic point, write the regular surface as
$t=f(z)$ and put $G(z)=\nabla f(z)+Jz$.
Since $DG-DG^\top=2J$,
$2n=\operatorname{rank}(DG-DG^\top)\le2\operatorname{rank}DG$.
For $n\ge2$, choose components $G_a,G_b$ with independent
differentials and use $y=(G_a,G_b)\in\RR^2$ as transverse coordinates.
The characteristic set lies in $\{y=0\}$,
so a countable chart cover makes it $\sigma_j$-null.

Put $g=|N_b|$. Near a characteristic point, $g$ is comparable
to $|G|$, and $|(G_a,G_b)|\le|G|$. The transverse coordinates and a
finite cover of the zeros in each compact $K\subset\Sigma_j$ give
\[
 \sigma_j(K\cap\{g<t\})\le c_{j,K}t^2
\]
for small $t>0$; outside the cover, $g$ has a positive lower bound.

This quadratic sublevel bound is the borderline case for vanishing
Dirichlet energy: a logarithmic transition supplies the required
decay, including when $n=2$.
Fix $\ell$ and put $K_\ell=\spt\widetilde\eta_{j,\ell}$.
For $0<\rho<1/2$, choose a smooth cutoff $\zeta_\rho$ that is
zero on $[0,\rho^2]$, one on $[\rho,\infty)$, and satisfies
$|\zeta_\rho'(t)|\le C/(t|\log\rho|)$ between these levels.
The function $q_\rho=\zeta_\rho(g)$ is $C^1$, including near
$g=0$. The sublevel bound and boundedness of $\nabla_{\Sigma_j}g$
on $K_\ell\cap\{g>0\}$ give
\[
 \begin{aligned}
 \int_{K_\ell}|\nabla_{\Sigma_j}q_\rho|^2\dd\sigma_j
 &\le\frac{c_{j,\ell}}{|\log\rho|^2}
       \int_{K_\ell\cap\{\rho^2<g<\rho\}}g^{-2}\dd\sigma_j\\
 &\le\frac{c_{j,\ell}}{|\log\rho|^2}
       \left(1+\int_{\rho^2}^{\rho}\frac{\dd t}{t}\right)
 \le\frac{c_{j,\ell}}{|\log\rho|}\longrightarrow0.
 \end{aligned}
\]
The second inequality follows by integration by parts against
$t\mapsto\sigma_j(K_\ell\cap\{g<t\})$.
Choose $\rho_{j,\ell}\le1/\ell$ so that this energy is at most
$1/\ell$, and set
$\eta_{j,\ell}=\widetilde\eta_{j,\ell}q_{\rho_{j,\ell}}$.
Its support is compact in $\Sigma_j^*$, and
\[
 \int_{\Sigma_j}|\nabla_{\Sigma_j}\eta_{j,\ell}|^2\dd\sigma_j
 \le2\int_{\Sigma_j}|\nabla_{\Sigma_j}\widetilde\eta_{j,\ell}|^2\dd\sigma_j
       +2/\ell\longrightarrow0.
\]
On every compact subset of $\Sigma_j^*$, $g$ has a positive
lower bound, so $q_{\rho_{j,\ell}}$ eventually equals one.
Local uniform convergence of $\widetilde\eta_{j,\ell}$ and
nullity of the characteristic set give both asserted convergence
statements.

The surface measure identity and $|N_{b,j}|\le1$ give
\[
 \int_{\Sigma_j}|\nabla_{\Sigma_j}\eta_{j,\ell}|\dd\sigma_{H,j}
 \le P_H(F_j)^{1/2}
 \left(\int_{\Sigma_j}|\nabla_{\Sigma_j}\eta_{j,\ell}|^2
                  \dd\sigma_j\right)^{1/2}\longrightarrow0.
\]
Since $|D_H^{\Sigma_j}\eta_{j,\ell}|\le
|\nabla_{\Sigma_j}\eta_{j,\ell}|$, this also gives \textup{(B2)}.
\end{proof}

\subsection{Vanishing horizontal centre variation}
\label{sec:centre-energy}

The centre estimate in \cite[Theorem~4.1]{CentreRigidity} applies
to sets $F_j$ of finite Riemannian perimeter in a fixed compact
set $\mathcal K\Subset U\Subset\Hn$, with positive volume and
horizontal perimeter, $0<\eps_j\le1$, and
$H_j\in C^2(U)$ satisfying $\|H_j-2n\|_{C^2(U)}\le1$.
Their outward-oriented smooth parts $\Sigma_j\subset\partial^*F_j$
must carry all Riemannian perimeter and admit
$\eta_{j,\ell}\in C_c^1(\Sigma_j^*)$, $0\le\eta_{j,\ell}\le1$,
converging to one $\sigma_j$-almost everywhere, with
$\int_{\Sigma_j}|\nabla_{\Sigma_j}\eta_{j,\ell}|^2\dd\sigma_j\to0$
at fixed $j$. If $P_{\eps_j}(F)-\int_FH_j\dd x$ is stationary
under compactly supported smooth ambient variations in $U$ and
has nonnegative Hessian on $\gOne$-normal speeds
$v\in C_c^2(\Sigma_j^*)$ with $\int_{\Sigma_j}v\dd\sigma_j=0$,
then there are $C,\delta_*>0$, depending only on $n,\mathcal K,U$,
such that
\begin{equation}\label{eq:quoted-centre-estimate}
 \begin{gathered}
 \delta_j:=\eps_j^2+\|H_j-2n\|_{C^2(U)}\le\delta_*
 \quad\Longrightarrow\\
 \int_{\Sigma_j}|D_H^{\Sigma_j}\Ca_j|^2\dd\sigma_{H,j}
 \le C\delta_j|F_j|.
 \end{gathered}
\end{equation}
Both $C$ and $\delta_*$ are independent of $\eps_j$ and of the
constants in the fixed-$j$ regularity and cutoff constructions.
The source's $\sigma_{1,j}$ and $\mathsf E_{C,j}$ are
$\sigma_j$ and $D_H^{\Sigma_j}\Ca_j$ here, with the reconstruction
\eqref{eq:centres} at unit scale.

\begin{proof}[Proof of Proposition~\ref{prop:normalized-approximants}]
Lemma~\ref{lem:selection-confinement} gives (B1), confinement in
$\mathcal K=\mathbb B$, and positive limiting volume and perimeter.
Propositions~\ref{prop:selected-regularity} and~\ref{prop:joint-cutoffs}
give (B2) and the smooth full-perimeter part and cutoffs required
by \eqref{eq:quoted-centre-estimate}.
The selection lemma also gives stationarity and Hessian
nonnegativity on zero-volume speeds: the Hessians differ by
$(2-I''(v_j))(\delta V)^2$, which vanishes there.
The identification with the source's closed normal pairing is
\cite[Proposition~2.1 and Appendix~A]{CentreRigidity}.
Finally,
\begin{equation}\label{eq:approximation-error}
 \eps_j^2+\|H_j-2n\|_{C^2(U)}
 \le\tau_j^2+C_E\tau_j\longrightarrow0.
\end{equation}
All hypotheses and the fixed threshold of
\eqref{eq:quoted-centre-estimate} therefore hold for large $j$.
Since $|F_j|$ and $P_H(F_j)$ are bounded, that estimate and
Cauchy--Schwarz give
\begin{equation}\label{eq:full-centre-decay}
 \begin{aligned}
 \int_{\Sigma_j}|D_H^{\Sigma_j}\Ca_j|^2\dd\sigma_{H,j}
 &\le C\tau_j,\\
 \int_{\Sigma_j}|D_H^{\Sigma_j}\Ca_j|\dd\sigma_{H,j}
 &\le P_H(F_j)^{1/2}
    \left(\int_{\Sigma_j}|D_H^{\Sigma_j}\Ca_j|^2\dd\sigma_{H,j}\right)^{1/2}
 \le C\tau_j^{1/2}\longrightarrow0.
 \end{aligned}
\end{equation}
This proves (B3).
\end{proof}

\section{Centre-weighted currents}
\label{sec:weighted-currents}

Throughout this section, $F_j,E,\Sigma_j$ satisfy
\textup{(B1)--(B3)} of Theorem~\ref{thm:weak-limit-characterization}.
In particular, the regular parts are only assumed $C^2$, and the
cutoff assumption is the horizontal $L^1$ condition in (B2).

\subsection{Horizontal boundary currents}
\label{weak:sec:classification}

For a smooth compactly supported horizontal vector field $V$, put
$\omega_V=\iota_V(\Theta\wedge\Omega_H)$.
A current is a continuous linear functional on compactly
supported test forms; $T=\tau\mu$ means that it integrates their
pairing with an orientation $\tau$ against $\mu$.

In the upper degrees of the Rumin complex
\cite{Rumin} and \cite[Sections~3.2 and~3.5]{Vittone2022}, the test forms are
\begin{equation}\label{weak:legacy-eq-2-1}
 J^k=\{\omega\in\Lambda^k:
       \Theta\wedge\omega=0,\ d\Theta\wedge\omega=0\},
 \qquad k\ge n+1,\qquad d_c=d|_{J^k},
\end{equation}
where $\Lambda^k$ denotes smooth differential $k$-forms.
The two conditions imply $d\omega\in J^{k+1}$: differentiate
$\Theta\wedge\omega=0$ and $d\Theta\wedge\omega=0$.
Consequently $d_c^2=0$.
Since $2n-1\ge n+1$ for $n\ge2$, the degrees used here are all
in this range. In particular, for a scalar $a$ and
$\omega\in J^{2n-1}$,
\[
 d_c(a\omega)=da\wedge\omega+a\,d_c\omega.
\]

Every form in $J^{2n}$ is uniquely $\omega_V$ for a horizontal
vector field $V$. Indeed, $\Theta\wedge\omega=0$ writes it as
$\Theta\wedge\beta$ with $\beta$ horizontal of degree $2n-1$,
and contraction with $\Omega_H$ identifies such $\beta$ with $V$;
the condition involving $d\Theta$ is automatic in this degree.
Use the exterior-form norms induced by $\gOne$ on $J^{2n}$ and
$J^{2n-1}$; in particular, $\|\omega_V\|=|V|$.
The $C^0$ norm of a test form is the supremum of its pointwise norm.
For a current $T$ of locally
finite mass, its mass measure is determined by
\[
 \|T\|(U)=\sup\bigl\{|T(\omega)|:
 \omega\in C_c^\infty(U;J^{2n}),\ \|\omega(x)\|\le1\bigr\}.
\]
A horizontal vector $v$ determines the dual class
$\mathfrak t_x(v)$ by
\[
 \langle\mathfrak t_x(v),\omega_V(x)\rangle=\langle v,V(x)\rangle.
\]
Its norm is $|v|$.
Thus if $T=\tau\mu$, $|\tau|=1$ $\mu$-almost everywhere, then
$\|T\|=\mu$. The boundary is
$\partial_cT(\omega)=T(d_c\omega)$.
The term \emph{closed} means $\partial_cT=0$, an integration-by-parts
identity.

\begin{lemma}
\ifcsname r@weak:lem:bv-rumin\endcsname%
\bookmark[level=3,dest={\getrefbykeydefault{weak:lem:bv-rumin}{anchor}{}}]%
  {Lemma \ref*{weak:lem:bv-rumin}: Horizontal boundary currents of sets}%
\fi
\label{weak:lem:bv-rumin}
For a set $F$ of finite horizontal perimeter, let $\partial_H^*F$
denote its horizontal reduced boundary, which carries $\mu_F$
\cite{FSSC}. The current
\begin{equation}\label{eq:boundary-current-action}
 T_F(\omega_V)=\int_{\partial_H^*F}\langle\nu_F,V\rangle\dd\mu_F
             =\int_F\divh V\dd x
\end{equation}
has finite mass and is closed:
\begin{equation}\label{weak:legacy-eq-2-2}
 T_F(d_c\omega)=0
 \qquad\bigl(\omega\in C_c^\infty(\Hn;J^{2n-1})\bigr).
\end{equation}
Its orientation and mass are
\begin{equation}\label{weak:legacy-eq-2-3}
 T_F=\tau_F\mu_F,\qquad
 \tau_F(x)=\mathfrak t_x(\nu_F(x)),\qquad \|T_F\|=\mu_F.
\end{equation}
If $F$ also has finite Riemannian perimeter, the boundary integral
in \eqref{eq:boundary-current-action} may be taken over $\partial^*F$.
On smooth noncharacteristic boundary pieces, $\mu_F$ is horizontal
surface measure and the orientation is outward.
If $F$ is bounded, $T_F$ is compactly supported.
\end{lemma}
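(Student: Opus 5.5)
The plan is to start from the definition of $D_H\mathbf1_F$ and reduce closedness to a Cartan-type identity for $d_c$ in the two top degrees of the Rumin complex.

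\textbf{The divergence identity.} The key computation is $d\omega_V=(\divh V)\,\Theta\wedge\Omega_H$. Since $\Theta\wedge\Omega_H$ is the Haar volume form and is left-invariant, Cartan's formula gives
\[
 d\iota_V(\Theta\wedge\Omega_H)=\mathcal L_V(\Theta\wedge\Omega_H)=(\operatorname{div}V)\,\Theta\wedge\Omega_H,
\]
where $\operatorname{div}$ is the Euclidean divergence of $V$ in the coordinates $(z,t)$, in which Haar measure is Lebesgue measure. Write $V=\sum_k(f_kX_k+g_kY_k)$. Each $X_k$, $Y_k$ is divergence-free, because its $\partial_t$-coefficient does not depend on $t$. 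Hence $\operatorname{div}V=\divh V$.

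\textbf{Finite mass and the orientation formula.} The definition of $D_H\mathbf1_F$ with $U=\Hn$ gives
\[
 \int_F\divh V\dd x=-\langle D_H\mathbf1_F,V\rangle=\int\langle\nu_F,V\rangle\dd\mu_F,
\]
using $D_H\mathbf1_F=-\nu_F\mu_F$. Since $\mu_F$ is carried by $\partial_H^*F$ \cite{FSSC}, this is \eqref{eq:boundary-current-action}. It follows that $|T_F(\omega_V)|\le\int|V|\dd\mu_F$. The duality $\|\omega_V\|=|V|$ together with the supremum formula for $|D_H\mathbf1_F|$ then gives $\|T_F\|=\mu_F$. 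The pairing $\langle\mathfrak t_x(\nu_F),\omega_V(x)\rangle=\langle\nu_F,V\rangle$ identifies the orientation as $\tau_F$, which has $|\tau_F|=1$ $\mu_F$-almost everywhere. This proves \eqref{weak:legacy-eq-2-3}. If $F$ is bounded, then $\divh V=0$ on $F$ whenever $\spt V$ misses $\overline F$, so $T_F$ is compactly supported.

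\textbf{Closedness.} Take $\omega\in C_c^\infty(\Hn;J^{2n-1})$. Then $d_c\omega=d\omega\in J^{2n}$, so $d\omega=\omega_V$ for a unique smooth compactly supported horizontal field $V$. By the divergence identity, $(\divh V)\,\Theta\wedge\Omega_H=d\omega_V=dd\omega=0$, so $\divh V\equiv0$. Therefore $T_F(d_c\omega)=\int_F\divh V\dd x=0$, which is \eqref{weak:legacy-eq-2-2}. This step is immediate once the divergence identity holds. The main point to get right is the identification of $d\omega_V$ with the horizontal divergence, including the left-invariance and divergence-free property of the frame.

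\textbf{Riemannian and smooth cases.} Suppose $F$ also has finite Riemannian perimeter. For smooth $V$, the Euclidean divergence theorem for finite-perimeter sets \cite[Chapter~12]{Maggi}, with outward Euclidean normal $\nu_F^{\rm e}$, gives
\[
 \int_F\divh V\dd x=\int_{\partial^*F}\langle V,\nu^{\rm e}_F\rangle\dd\Haus^{2n}.
\]
The normal-conversion formulas from the proof of Proposition~\ref{prop:selected-regularity} convert this into
\[
 \int_{\partial^*F}\langle V,N_{b,F}\rangle\dd\Haus^{2n}_{\gOne}=\int_{\partial^*F}\langle V,N_{b,F}/|N_{b,F}|\rangle|N_{b,F}|\dd\Haus^{2n}_{\gOne}.
\]
This shows that $\mu_F=|N_{b,F}|\Haus^{2n}_{\gOne}|_{\partial^*F}$, with $\nu_F=N_{b,F}/|N_{b,F}|$ wherever $N_{b,F}\neq0$. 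In particular the boundary integral may be taken over $\partial^*F$, and on smooth noncharacteristic pieces $\mu_F$ is the horizontal surface measure \eqref{sel:tag-2-1} with outward orientation.
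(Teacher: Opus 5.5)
Your proposal is correct and follows essentially the same route as the paper: Cartan's formula with the divergence-free frame gives $d\omega_V=(\divh V)\,\Theta\wedge\Omega_H$, the definition $D_H\mathbf1_F=-\nu_F\mu_F$ gives the action formula, $d^2=0$ gives $\divh V=0$ and hence closedness, and polar decomposition gives the orientation and mass. Your Riemannian case, which applies Euclidean Gauss--Green and then converts normals via $d\Haus^{2n}_{\gOne}=|\mathsf B^\top\nu^{\rm e}_F|\,d\Haus^{2n}$, is the paper's Gauss--Green step written out on the full reduced boundary $\partial^*F$ rather than only on smooth pieces.
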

\begin{proof}
Cartan's formula and the zero Haar divergence of the frame fields give
$d\omega_V=(\divh V)\Theta\wedge\Omega_H$.
The definition $D_H\mathbf1_F=-\nu_F\mu_F$ gives
\eqref{eq:boundary-current-action}. For $d_c\omega=\omega_V$,
$d\omega_V=d^2\omega=0$, hence $\divh V=0$ and
\eqref{weak:legacy-eq-2-2} follows. Polar decomposition gives
\eqref{weak:legacy-eq-2-3}; when $F$ is bounded, its current has
compact support. On smooth pieces, Gauss--Green gives
$d\mu_F=|N_b|\,d\sigma_\Sigma$ and $\nu_F=N_b/|N_b|$.
\end{proof}

\subsection{Asymptotic closedness}
\label{sec:weighted-cycles}

\begin{lemma}
\ifcsname r@lem:centre-weight-error\endcsname%
\bookmark[level=3,dest={\getrefbykeydefault{lem:centre-weight-error}{anchor}{}}]%
  {Lemma \ref*{lem:centre-weight-error}: Vanishing error for centre-dependent weights}%
\fi
\label{lem:centre-weight-error}
Under \textup{(B1)--(B3)}, for every $\varphi\in C_c^1(\Hn)$
and $\omega\in C_c^\infty(\Hn;J^{2n-1})$,
\begin{equation}\label{eq:cycle-defect}
 \left|\int_{\Sigma_j}\varphi(\Ca_j)
       \langle\tau_{F_j},d_c\omega\rangle\dd\sigma_{H,j}\right|
 \le\|\omega\|_{C^0}\|D\varphi\|_{C^0}
       \int_{\Sigma_j}|D_H^{\Sigma_j}\Ca_j|\dd\sigma_{H,j}
 \longrightarrow0.
\end{equation}
The estimate applies to all sufficiently large $j$.
\end{lemma}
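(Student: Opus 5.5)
The plan is to integrate by parts on the smooth regular part $\Sigma_j$, using the closedness of $T_{F_j}$ localized by the cutoffs $\eta_{j,\ell}$ of (B2), so that the only surviving term is the tangential derivative of the weight $\varphi(\Ca_j)$. Fix $j$ large enough that (B2)--(B3) hold, and put $a_j=\varphi\circ\Ca_j$. By Section~\ref{sec:centre-reconstruction}, $a_j$ is $C^1$ on $\Sigma_j$, since $\Ca_j$ is $C^1$ on every oriented $C^2$ patch. Because $T_{F_j}=\tau_{F_j}\sigma_{H,j}$ is carried by $\Sigma_j$ (Lemma~\ref{weak:lem:bv-rumin} and (B2)), the integral in \eqref{eq:cycle-defect} equals $T_{F_j}(a_j\,d_c\omega)$ in the formal sense. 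The function $a_j$ is only defined on the surface, so I will extend $\eta_{j,\ell}a_j$, which has compact support in $\Sigma_j$, to an ambient $C^1$ function $A$ near $\Sigma_j$ (by a tubular-neighbourhood extension constant along normals), and then mollify if needed.

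The key identity comes from the product rule $d_c(A\omega)=dA\wedge\omega+A\,d_c\omega$, valid in degree $2n-1\ge n+1$. Closedness \eqref{weak:legacy-eq-2-2}, applied after approximating $A\omega$ by smooth $J^{2n-1}$ forms (the first-order approximation passes through because $T_{F_j}$ has finite mass and $dA$ converges uniformly), gives
\[
 \int_{\Sigma_j}\eta_{j,\ell}a_j\langle\tau_{F_j},d_c\omega\rangle\dd\sigma_{H,j}
 =-\int_{\Sigma_j}\langle\tau_{F_j},d(\eta_{j,\ell}a_j)\wedge\omega\rangle\dd\sigma_{H,j}.
\]
Here I need a pointwise bound: at a noncharacteristic point, $\tau=\mathfrak t(\nu)$, and pairing $df\wedge\omega$ against the horizontal orientation only sees the part of $df$ along $\xi\cap T\Sigma$. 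This follows because $dA\wedge\omega$ restricted to the surface depends only on the tangential differential, and its $J^{2n}$ representative pairs with $\nu$ through the horizontal tangential component. This yields
\[
 |\langle\tau,df\wedge\omega\rangle|\le|D_H^{\Sigma_j}f|\,\|\omega\|.
\]
I expect verifying this linear-algebra identity, namely that $\langle\mathfrak t(\nu),\beta\wedge\omega\rangle$ vanishes when $\beta$ annihilates $\xi\cap T\Sigma$ and has norm at most $|\beta|_{\xi\cap T\Sigma}|\,\|\omega\|$, to be the main obstacle. I would check it in an adapted orthonormal frame $\nu,e_2,\dots,e_{2n},T$, where $\xi\cap T\Sigma$ is spanned by $e_2,\dots,e_{2n}$ and the surface tangent space additionally contains a combination of $\nu$ and $T$.

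Given this bound, the chain rule gives $|D_H^{\Sigma_j}(\eta a_j)|\le\|\varphi\|_{C^0}|D_H^{\Sigma_j}\eta_{j,\ell}|+\|D\varphi\|_{C^0}|D_H^{\Sigma_j}\Ca_j|$. Letting $\ell\to\infty$, the first term vanishes by (B2), and the left side converges by dominated convergence, since $\eta_{j,\ell}\to1$ $\sigma_{H,j}$-a.e. and $a_j d_c\omega$ is bounded. This yields \eqref{eq:cycle-defect}, and its right side tends to zero as $j\to\infty$ by (B3).
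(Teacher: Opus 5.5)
Your proposal is correct and follows the paper's proof essentially step by step. You extend $b=\eta_{j,\ell}\varphi(\Ca_j)$ to an ambient $C_c^1$ function, integrate by parts using closedness of $T_{F_j}$ with the product rule in $J^{2n-1}$ and $C^1$ approximation, and bound $\langle\tau_{F_j},db\wedge\omega\rangle$ by $|D_H^{\Sigma_j}b|\,|\omega|$ in an adapted frame with $E_{2n}=\nu_{F_j}$. That frame computation is exactly the check you describe: the paper writes $\omega=\Theta\wedge\beta$ and represents $\tau_{F_j}$ by $T\wedge E_1\wedge\cdots\wedge E_{2n-1}$. You then let $\ell\to\infty$ using (B2) and dominated convergence before invoking (B3), as the paper does.

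The only difference is cosmetic. You extend $b$ through a $C^1$ tubular-neighbourhood projection, which is available because $\Sigma_j$ is $C^2$. The paper instead extends chart by chart, constantly in a transverse coordinate, which needs only $C^1$ charts. Either works provided the extension agrees with $b$ on all of $\Sigma_j$. This holds once the neighbourhood is thin enough to meet $\partial F_j$ only near $\spt b$.
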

\begin{proof}
Fix $j$ with $\int_{\Sigma_j}|D_H^{\Sigma_j}\Ca_j|\dd\sigma_{H,j}<\infty$, by (B3).
For the cutoffs in (B2), $b=\eta_{j,\ell}\varphi(\Ca_j)\in C_c^1(\Sigma_j)$.
In finitely many regular charts, extend the
partition-of-unity pieces of $b$ constantly in a transverse coordinate
and cut off away from their surface supports. Their sum is an ambient
$C_c^1$ extension, still denoted by $b$. Closedness of $T_{F_j}$ from
Lemma~\ref{weak:lem:bv-rumin} and $C^1$ approximation give
$\int_{\Sigma_j} b\langle\tau_{F_j},d_c\omega\rangle\dd\sigma_{H,j}
 =-T_{F_j}(db\wedge\omega)$.
At a noncharacteristic point, choose an oriented orthonormal
horizontal basis with $E_{2n}=\nu_{F_j}$. Then
$E_1,\ldots,E_{2n-1}$ span $\xi\cap T\Sigma_j$, and
$\tau_{F_j}$ is represented by $T\wedge E_1\wedge\cdots\wedge E_{2n-1}$.
Write $\omega=\Theta\wedge\beta$ with $\beta$ horizontal, and put
\[
 \beta_k:=(-1)^{k-1}
 \beta(E_1,\ldots,\widehat{E_k},\ldots,E_{2n-1}),
 \qquad 1\le k\le2n-1,
\]
where the hat denotes omission. Since $\Theta(T)=1$,
$\Theta(E_k)=0$ and $\iota_T\beta=0$, expanding the wedge product gives
\[
 \begin{aligned}
 \langle\tau_{F_j},db\wedge\omega\rangle
 &=(db\wedge\Theta\wedge\beta)(T,E_1,\ldots,E_{2n-1})\\
 &=-(db\wedge\beta)(E_1,\ldots,E_{2n-1})\\
 &=-\sum_{k=1}^{2n-1}(E_kb)\beta_k.
 \end{aligned}
\]
The coefficients $\beta_k$ form a subset, up to signs, of the
coefficients of $\beta$ in an orthonormal exterior basis. Hence
$\sum_{k=1}^{2n-1}|\beta_k|^2\le|\beta|^2$.
Also $|\Theta\wedge\beta|=|\beta|$, since $\Theta$ is unit and
$\beta$ is horizontal. Cauchy--Schwarz therefore gives
\[
 \begin{aligned}
 |\langle\tau_{F_j},db\wedge\omega\rangle|
 &\le\left(\sum_{k=1}^{2n-1}|E_kb|^2\right)^{1/2}
       \left(\sum_{k=1}^{2n-1}|\beta_k|^2\right)^{1/2}\\
 &\le|D_H^{\Sigma_j}b|\,|\beta|
  =|D_H^{\Sigma_j}b|\,|\omega|.
 \end{aligned}
\]
Only derivatives along $\xi\cap T\Sigma_j$ occur, so the pairing
depends only on the surface trace of $b$.
The product and chain rules therefore give
\begin{equation}\label{eq:cutoff-weighted-cycle}
 \begin{aligned}
 &\left|\int_{\Sigma_j}\eta_{j,\ell}\varphi(\Ca_j)
       \langle\tau_{F_j},d_c\omega\rangle\dd\sigma_{H,j}\right|\\
 &\quad\le\|\omega\|_{C^0}\left(
 \|\varphi\|_\infty\int_{\Sigma_j}|D_H^{\Sigma_j}\eta_{j,\ell}|\dd\sigma_{H,j}
 +\|D\varphi\|_{C^0}\int_{\Sigma_j}\eta_{j,\ell}|D_H^{\Sigma_j}\Ca_j|\dd\sigma_{H,j}
 \right).
 \end{aligned}
\end{equation}
Let $\ell\to\infty$. The first term vanishes by (B2), and the
left-hand integrand is dominated by
$\|\varphi\|_\infty|d_c\omega|$, integrable against
$\sigma_{H,j}$. The second integral is bounded by
$\int_{\Sigma_j}|D_H^{\Sigma_j}\Ca_j|\dd\sigma_{H,j}$.
This proves the estimate in \eqref{eq:cycle-defect}; (B3) gives
its limit as $j\to\infty$.
\end{proof}

\section{Classification of minimizers and the sharp inequality}
\label{sec:one-centre-recognition}

\subsection{Constancy of closed subcurrents}

For a distribution $f$ on a connected open set, $D_Hf=0$ implies
$\partial_tf=-\tfrac12[X_1,Y_1]f=0$. The formulas for $X_k,Y_k$
then give all Euclidean derivatives zero, so $f$ is constant.

\begin{lemma}[Sobolev inequality on $\Hn$]
\ifcsname r@lem:profile-sobolev\endcsname%
\bookmark[level=3,dest={\getrefbykeydefault{lem:profile-sobolev}{anchor}{}}]%
  {Lemma \ref*{lem:profile-sobolev}: Sobolev inequality on Heisenberg groups}%
\fi\label{lem:profile-sobolev}
Let $E$ be a bounded global minimizer of horizontal perimeter at
its own positive volume. Put $p=Q/(Q-1)$ and
$\kappa=P_H(E)/|E|^{1/p}$. Then
\[
 \kappa\|h\|_{L^p(\Hn)}\le\int_{\Hn}|\nabla_Hh|\dd x
 \qquad\bigl(h\in C_c^\infty(\Hn)\bigr).
\]
\end{lemma}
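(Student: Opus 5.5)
The plan is the standard Federer--Fleming / Maz'ya route: coarea plus the isoperimetric profile \eqref{sel:tag-2-11}, then a layer-cake comparison of the $L^p$ norm. First reduce to $h\ge0$. Since $|\nabla_H|h||\le|\nabla_Hh|$ almost everywhere and $\||h|\|_{L^p}=\|h\|_{L^p}$, it suffices to treat $|h|$. This function is Lipschitz, compactly supported and in $BV_H$. Equivalently, apply the argument below to $h^+$ and $h^-$ separately and add, using $\|h\|_p\le\|h^+\|_p+\|h^-\|_p$ together with $|\nabla_Hh|=|\nabla_Hh^+|+|\nabla_Hh^-|$ almost everywhere.

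For $h\ge0$ in $C_c^\infty$ (or Lipschitz with compact support), apply the Euclidean coarea formula with weight $|\nabla_Hh|/|\nabla h|$, exactly as in the proof of Lemma~\ref{lem:selection-confinement}. This gives
\[
 \int_{\Hn}|\nabla_Hh|\dd x=\int_0^\infty P_H(\{h>s\})\dd s .
\]
For almost every $s>0$, the level set $\{h>s\}$ has finite volume and finite horizontal perimeter. By \eqref{sel:tag-2-11} with $v_0=|E|$,
\[
 P_H(\{h>s\})\ge I(|\{h>s\}|)=\kappa\,|\{h>s\}|^{1/p},
\]
because $P_H(E)(v/v_0)^{(Q-1)/Q}=\kappa v^{1/p}$. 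The same bound holds trivially when $|\{h>s\}|=0$. Consequently
\[
 \int|\nabla_Hh|\ge\kappa\int_0^\infty\|\mathbf1_{\{h>s\}}\|_{L^p}\dd s .
\]

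It remains to show $\|h\|_{L^p}\le\int_0^\infty\|\mathbf1_{\{h>s\}}\|_{L^p}\dd s$. This is Minkowski's integral inequality applied to the layer-cake representation $h=\int_0^\infty\mathbf1_{\{h>s\}}\dd s$. If one prefers an elementary argument, use the classical truncation: put $h_t=\min(h,t)$ and $\phi(t)=\|h_t\|_p$. Then $\phi(t+\delta)-\phi(t)\le\delta\,|\{h>t\}|^{1/p}$, so $\phi$ is Lipschitz with $\phi'(t)\le|\{h>t\}|^{1/p}$. Integrating over $t$ gives the claim.

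Combining the two displays yields $\kappa\|h\|_{L^p}\le\int|\nabla_Hh|\dd x$.

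The only delicate point is that the profile identity \eqref{sel:tag-2-11} must be used for all volumes. That identity relies on the dilation argument and on the global minimality of $E$ at its own volume, both already stated. The inequality needs only the lower bound $P_H(F)\ge I(|F|)$ for sets of finite volume, so no regularity of the level sets beyond finite horizontal perimeter is required.
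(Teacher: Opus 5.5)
Your proposal is correct and follows essentially the same route as the paper: it bounds the level sets using the profile inequality $P_H(F)\ge\kappa|F|^{1/p}$, applies the weighted Euclidean coarea formula, and uses Minkowski's integral inequality on the layer-cake representation. The differences are cosmetic: you reduce to $h^{\pm}$ (or $|h|$) and give an optional truncation argument in place of Minkowski, whereas the paper works with the level sets of $|h|$ directly.
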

\begin{proof}
The profile \eqref{sel:tag-2-11} gives
$P_H(F)\ge\kappa|F|^{1/p}$ for every finite-volume competitor $F$.
The coarea argument for the geometric Sobolev inequality
\cite[p.~267]{CDG1994} preserves the constant $\kappa$.
For $h\in C_c^\infty(\Hn)$, the layer-cake identity
$|h|=\int_0^\infty\mathbf1_{\{|h|>t\}}\dd t$
and Minkowski's integral inequality therefore give
\[
 \begin{aligned}
 \kappa\|h\|_{L^p}
 &\le\kappa\int_0^\infty|\{|h|>t\}|^{1/p}\dd t\\
 &\le\int_0^\infty P_H(\{|h|>t\})\dd t
 =\int_{\Hn}|\nabla_Hh|\dd x.
 \end{aligned}
\]
The last equality is weighted Euclidean coarea
\cite[Chapter~13]{Maggi}, with weight
$|\nabla_Hh|/|\nabla h|$ where $\nabla h\ne0$ and zero elsewhere.
Thus the Sobolev constant here is determined by $E$ through
\eqref{sel:tag-2-11}.

\end{proof}

\begin{remark}\label{rem:sobolev-noncircularity}
The coefficient $\kappa=P_H(E)/|E|^{(Q-1)/Q}$ in
Lemma~\ref{lem:profile-sobolev} is determined by the global minimizer $E$.
The profile \eqref{sel:tag-2-11} follows from global minimality and
Heisenberg dilations alone, and coarea transfers the same coefficient
to the Sobolev inequality. The identity $\kappa=C_n$ is a consequence
of the subsequent classification of minimizers, not an input to this
lemma or to Proposition~\ref{weak:prop:closed-subcurrent}.
\end{remark}

\begin{lemma}[Poincar\'e lemma for horizontal currents]
\ifcsname r@lem:horizontal-poincare\endcsname%
\bookmark[level=3,dest={\getrefbykeydefault{lem:horizontal-poincare}{anchor}{}}]%
  {Lemma \ref*{lem:horizontal-poincare}: Poincare lemma for horizontal currents}%
\fi\label{lem:horizontal-poincare}
Let $n\ge2$ and let $T$ be a compactly supported closed
upper-Rumin $2n$-current of finite mass on $\Hn$.
Write $T=\mathfrak t(\nu)\lambda$, where $\lambda=\|T\|$
and $\nu$ is a unit horizontal vector $\lambda$-almost everywhere.
There is a compactly supported distribution $f$ on $\Hn$
satisfying $D_Hf=-\nu\lambda$.
\end{lemma}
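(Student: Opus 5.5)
We will produce $f$ by duality, solving $\divh$-type equations, and use the exactness of the Rumin complex on $\Hn$ in degree $2n$. The target identity $D_Hf=-\nu\lambda$ means
\[
 -\int f\,\divh V\dd x=-\int\langle\nu,V\rangle\dd\lambda=-T(\omega_V)
 \qquad(V\in C_c^\infty(\Hn;\xi)),
\]
so we must find a distribution $f$ with $\langle f,\divh V\rangle=T(\omega_V)$ for all $V$. By the proof of Lemma~\ref{weak:lem:bv-rumin}, $d_c\omega_V=(\divh V)\,\Theta\wedge\Omega_H$. Hence the natural candidate is $f(\phi):=T(\omega)$ whenever $\phi\,\Theta\wedge\Omega_H=d_c\omega$ with $\omega\in C_c^\infty(\Hn;J^{2n})$.

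\textbf{Step 1: well-definedness.} If $d_c\omega=d_c\omega'$, then $\omega-\omega'$ is a compactly supported $d_c$-closed form in $J^{2n}$. On the contractible group $\Hn$, the Rumin complex has the same compactly supported cohomology as de Rham, namely $\RR$ in top degree $2n+1$ and zero in degree $2n$. So $\omega-\omega'=d_c\alpha$ with $\alpha\in C_c^\infty(\Hn;J^{2n-1})$. This is where $n\ge2$ enters: $2n-1\ge n+1$ puts $\alpha$ in the upper range, where $d_c=d$ and the de Rham primitive can be corrected to satisfy the two Rumin constraints. Closedness of $T$ then gives $T(\omega)=T(\omega')$.

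\textbf{Step 2: extension to all test functions.} The forms $d_c\omega$ with $\omega\in J^{2n}$ compactly supported are exactly the $\phi\,\Theta\wedge\Omega_H$ with $\int\phi=0$, by the same compactly supported cohomology statement in top degree. To define $f$ on every $\phi\in C_c^\infty$, fix $\chi\in C_c^\infty$ with $\int\chi=1$ and set
\[
 f(\phi):=T\bigl(\omega_{\phi-(\int\phi)\chi}\bigr),
\]
where $\omega_{\psi}$ is a primitive chosen by a fixed continuous right inverse of $d_c$ on mean-zero forms. A natural choice is a homotopy operator, for instance a Bogovskii-type operator built from left-invariant convolution or the explicit Rumin homotopy, mapping $C_c^\infty$ continuously into $C_c^\infty$ with supports in a controlled ball. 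With this choice $f$ is a distribution, because $T$ has finite mass and so is continuous in $C^0$. Changing $\chi$ alters $f$ only by a constant multiple of the constant distribution $1$.

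\textbf{Step 3: compact support.} Outside a large ball containing $\spt T$, $D_Hf=0$, because there the right-hand side $-\nu\lambda$ vanishes. By the remark preceding Lemma~\ref{lem:profile-sobolev}, $f$ is therefore constant on the connected complement of that ball, which is connected since $2n+1\ge2$. Subtracting this constant preserves $D_Hf=-\nu\lambda$ and yields compact support. To pin down the constant we may also take $\chi$ supported far away.

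\textbf{Main obstacle.} The delicate point is Step 1 and the continuity in Step 2: we need a compactly supported Poincar\'e lemma for $d_c$ on $J^{2n}$, with primitives staying inside $J^{2n-1}$. The plan is to take a compactly supported de Rham primitive $\alpha_0$ with $d\alpha_0=\omega-\omega'$ and correct it using the Lefschetz decomposition. In degree $2n-1>n$, multiplication by $d\Theta$ is surjective onto the relevant horizontal forms, so the components of $\alpha_0$ that violate the constraints can be removed by adding $d$ of forms of the type $\Theta\wedge\gamma$ and $d\Theta\wedge\gamma$; alternatively one can invoke Rumin's homotopy equivalence with the de Rham complex directly. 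Both routes preserve compact support.
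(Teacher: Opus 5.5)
Your argument is correct, and it runs dual to the paper's.

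\textbf{The paper's route.} It works with the distributional horizontal one-form $\beta$ representing $-\nu\lambda$. The Franchi--Pansu current comparison turns closedness of $T$ into $d_c\beta=0$ in the lower Rumin complex. The differential projection $\Pi_{\mathcal E}$ then lifts $\beta$ to a closed, compactly supported distributional de Rham one-form $\alpha$, and the paper takes the explicit Euclidean primitive $f=\sum_i(\partial_i\Gamma)*_{\mathrm E}\alpha_i$.

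\textbf{Your route.} You define $f$ on test functions by evaluating $T$ on primitives in $J^{2n}$. Closedness is therefore used exactly as it is defined here, $T(d_c\omega)=0$ for smooth $\omega\in J^{2n-1}$. Your only cohomological input is $H^{2n}_c=0$ for the upper complex, and this is more elementary than your last paragraph suggests. Suppose $d\alpha_0=\omega\in J^{2n}$ and write $\alpha_0=\alpha_h+\Theta\wedge\beta_0$ with $\alpha_h$ horizontal. Since $d\Theta\wedge\cdot$ maps horizontal $(2n-3)$-forms onto horizontal $(2n-1)$-forms exactly when $n\ge2$, you can choose $\gamma$ with $d\Theta\wedge\gamma=\alpha_h$. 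Subtracting $d(\Theta\wedge\gamma)$ leaves $\Theta\wedge\beta_1$. The second Rumin constraint then holds automatically, because $d\Theta\wedge\Theta\wedge\beta_1=\Theta\wedge\omega=0$.

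\textbf{What each approach buys.} Your route avoids transferring closedness to distributional forms in the lower complex, which is the substantive content of the Franchi--Pansu citation. Its price is the continuity step showing that $f$ is a distribution; the paper gets this for free from its convolution formula.

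\textbf{One correction for the continuity step.} Use a Bogovskii operator on a Euclidean ball, followed by the first-order conversion of Euclidean divergence into $\divh$ via $\partial_tw=-\tfrac12(X_1Y_1-Y_1X_1)w$. This gives compactly supported primitives with $C^k$ bounds. Primitives built by left-invariant convolution, as you suggest, would not be compactly supported, so they would not lie in $C_c^\infty(J^{2n})$, where your well-definedness step operates. Since well-definedness makes $f$ independent of the chosen ball, this gives continuity on every $C_c^\infty(K)$.
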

\begin{proof}
Let $\beta$ be the horizontal distributional one-form representing
$-\nu\lambda$. Rumin's contact complex has the same cohomology as
the de Rham complex \cite[p.~401]{Rumin1990}.
On horizontal one-forms, $d_c$ denotes the exterior derivative
modulo the differential ideal generated by $\Theta$ and $d\Theta$,
extended to distributional coefficients.
Let $\Pi_{\mathcal E}$ denote the differential projection onto the
subcomplex $\mathcal E$ of de Rham forms representing the Rumin complex.
The current comparison
\cite[Theorem~2.7, Lemma~4.8, and Proposition~4.9]{FranchiPansu}
gives $d_c\beta=0$ by closedness of $T$, and
$\alpha=\Pi_{\mathcal E}\beta$ is a closed distributional one-form
with $\alpha|_\xi=\beta|_\xi$.
Since $\Pi_{\mathcal E}$ is a differential operator,
$\spt\alpha\subset\spt\beta$, so $\alpha$ is compactly supported.

In Euclidean coordinates write
$\alpha=\sum_{i=1}^{2n+1}\alpha_i\,dx_i$ and let $\Gamma$ satisfy
$\Delta_{\mathrm E}\Gamma=\delta_0$. Ordinary Euclidean convolution
$*_{\mathrm E}$ defines
\[
 f=\sum_i(\partial_i\Gamma)*_{\mathrm E}\alpha_i.
\]
Each convolution is well defined because $\alpha_i$ has compact
support. Closedness gives $\partial_j\alpha_i=\partial_i\alpha_j$,
so differentiation of convolutions yields
\[
 \begin{aligned}
 \partial_jf
 &=\sum_i(\partial_i\Gamma)*_{\mathrm E}\partial_j\alpha_i\\
 &=\sum_i(\partial_i\Gamma)*_{\mathrm E}\partial_i\alpha_j\\
 &=\sum_i(\partial_i^2\Gamma)*_{\mathrm E}\alpha_j
   =(\Delta_{\mathrm E}\Gamma)*_{\mathrm E}\alpha_j=\alpha_j.
 \end{aligned}
\]
Thus $df=\alpha$. Since $\Theta$ vanishes on horizontal vectors,
the horizontal part of this identity is $D_Hf=-\nu\lambda$.
At this point $f$ is a distribution; no integrability has been assumed.

Choose a Euclidean ball containing $\spt\alpha$. On its connected
exterior, $df=0$, so $f$ is a constant distribution there.
Subtracting that constant makes $f$ compactly supported and
leaves its derivatives unchanged.
\end{proof}

\begin{proposition}[Constancy of closed subcurrents]
\ifcsname r@weak:prop:closed-subcurrent\endcsname%
\bookmark[level=3,dest={\getrefbykeydefault{weak:prop:closed-subcurrent}{anchor}{}}]%
  {Proposition \ref*{weak:prop:closed-subcurrent}: Constancy of closed subcurrents}%
\fi\label{weak:prop:closed-subcurrent}
Let $E$ be a bounded global minimizer of horizontal perimeter at
its own positive volume. If $T=\tau_E\lambda$ is a closed
upper-Rumin $2n$-current with $0\le\lambda\le\mu_E$, then
$T=cT_E$ and $\lambda=c\mu_E$ for some $c\in[0,1]$.
\end{proposition}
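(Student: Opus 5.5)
The plan is to compare $T$ with its complement $T_E-T$. Both are closed, and the Poincaré lemma turns each into a function. The Sobolev inequality of Lemma~\ref{lem:profile-sobolev} is attained by $\mathbf1_E$, so splitting $\mathbf1_E$ into these two pieces forces equality in Minkowski's inequality in $L^p$, $p=Q/(Q-1)\in(1,\infty)$. Strict convexity then forces proportionality.

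\emph{Step 1 (two closed pieces).} Since $T_E$ is closed by Lemma~\ref{weak:lem:bv-rumin}, the current $T':=T_E-T=\tau_E(\mu_E-\lambda)$ is closed. Because $0\le\mu_E-\lambda\le\mu_E$ and $|\tau_E|=1$ $\mu_E$-almost everywhere, we have $\|T\|=\lambda$ and $\|T'\|=\mu_E-\lambda$. Both currents are compactly supported, since $E$ is bounded. Lemma~\ref{lem:horizontal-poincare}, which needs $n\ge2$, gives a compactly supported distribution $f$ with $D_Hf=-\nu_E\lambda$. Put $g:=\mathbf1_E-f$; then $g$ is compactly supported and $D_Hg=-\nu_E(\mu_E-\lambda)$.

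\emph{Step 2 (Sobolev for compactly supported $BV_H$-type distributions).} This is the step I expect to be the main technical point. We need $f,g\in L^p$ with
\[
 \kappa\|f\|_{L^p}\le\lambda(\Hn),\qquad \kappa\|g\|_{L^p}\le(\mu_E-\lambda)(\Hn).
\]
I would mollify by the left convolution \eqref{eq:left-convolution}. The function $f_\delta=\rho_\delta*f$ is smooth and compactly supported. Its horizontal gradient is $\nabla_Hf_\delta=\rho_\delta*D_Hf$, by the commutation of left convolution with left-invariant horizontal fields, applied here at the distributional level. The contraction \eqref{eq:convolution-contraction} then gives $\int|\nabla_Hf_\delta|\le\lambda(\Hn)$. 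Lemma~\ref{lem:profile-sobolev} therefore yields the uniform bound $\kappa\|f_\delta\|_{L^p}\le\lambda(\Hn)$. Since $L^p$ is reflexive, a subsequence of $f_\delta$ converges weakly in $L^p$. The distributional limit is $f$, so $f\in L^p$, and weak lower semicontinuity of the norm gives the bound. The same argument applies to $g$.

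\emph{Step 3 (equality and strict convexity).} With $\kappa=P_H(E)/|E|^{1/p}$ we have $\kappa\|\mathbf1_E\|_{L^p}=P_H(E)=\mu_E(\Hn)$. Combining this with Step 2 and Minkowski's inequality,
\[
 \kappa\|\mathbf1_E\|_p=\lambda(\Hn)+(\mu_E-\lambda)(\Hn)\ge\kappa(\|f\|_p+\|g\|_p)\ge\kappa\|f+g\|_p=\kappa\|\mathbf1_E\|_p .
\]
Hence equality holds throughout. Since $1<p<\infty$, equality in Minkowski's inequality forces $f$ and $g$ to be nonnegative multiples of a common function. Because $f+g=\mathbf1_E$, this gives $f=c\mathbf1_E$ and $g=(1-c)\mathbf1_E$ for some $c\in[0,1]$. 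The degenerate case where one of $f,g$ vanishes is included, with $c=0$ or $c=1$.

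\emph{Step 4 (conclusion).} Differentiating gives $-\nu_E\lambda=D_Hf=cD_H\mathbf1_E=-c\,\nu_E\mu_E$. Taking total variations yields $\lambda=c\mu_E$. Since $T=\tau_E\lambda$, it follows that $T=c\,\tau_E\mu_E=cT_E$.
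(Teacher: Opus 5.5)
Your proposal is correct and follows the paper's proof essentially step for step. Both arguments obtain $f$ from the Poincar\'e lemma, left-mollify $f$ and $\mathbf1_E-f$, and combine the Sobolev inequality with the convolution contraction to get $L^p$ bounds that pass to weak limits. Both then observe that $\kappa\|\mathbf1_E\|_{L^p}=P_H(E)$ forces equality in Minkowski's inequality, so strict convexity gives $f=c\mathbf1_E$, and comparing horizontal derivatives gives $\lambda=c\mu_E$. Your remark that $T_E-T$ is closed is harmless but not needed, since $g=\mathbf1_E-f$ is defined directly.
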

\begin{proof}
Since $0\le\lambda\le\mu_E$ and $E$ is bounded,
$\spt T\subset\spt\mu_E\subset\overline E$ is compact.
Lemma~\ref{lem:horizontal-poincare} gives a compactly supported
distribution $f$ with $D_Hf=-\nu_E\lambda$.
Put $p=Q/(Q-1)>1$ and $\kappa=P_H(E)/|E|^{1/p}$.
Using the nonnegative approximate
identity and group convolution of \eqref{eq:left-convolution}, set
\[
 f_\delta=\rho_\delta*f,\qquad
 g_\delta=\rho_\delta*\mathbf1_E-f_\delta.
\]
For small $\delta$, both functions are smooth and supported in a
common compact set. Left-invariance of the horizontal derivatives gives
\[
 \begin{aligned}
 D_Hf_\delta&=-\rho_\delta*(\nu_E\lambda),\\
 D_Hg_\delta&=-\rho_\delta*(\nu_E(\mu_E-\lambda)).
 \end{aligned}
\]
Here $\lambda$ and $\mu_E-\lambda$ are nonnegative, and
$|\nu_E|=1$ almost everywhere for both measures. Lemma~\ref{lem:profile-sobolev} and \eqref{eq:convolution-contraction} therefore imply
\[
 \begin{aligned}
 \kappa\|f_\delta\|_{L^p}
 &\le\int_{\Hn}|D_Hf_\delta|\dd x\le\lambda(\Hn),\\
 \kappa\|g_\delta\|_{L^p}
 &\le\int_{\Hn}|D_Hg_\delta|\dd x\le(\mu_E-\lambda)(\Hn).
 \end{aligned}
\]
Their sum is bounded by $P_H(E)$ because the two measures
add to $\mu_E$.

Since $1<p<\infty$, these bounds give a common subsequence along
which $f_\delta$ and $g_\delta$ converge weakly in $L^p$.
Convolution by an approximate identity also gives
$f_\delta\to f$ and $g_\delta\to\mathbf1_E-f$ in distributions.
The two kinds of convergence agree on smooth compactly supported
test functions, so their weak limits represent precisely $f$ and
$\mathbf1_E-f$. In particular, both are $L^p$ functions.
Weak lower semicontinuity of the two norms, followed by the
triangle inequality, gives
\[
 \|\mathbf1_E\|_{L^p}
 \le\|f\|_{L^p}+\|\mathbf1_E-f\|_{L^p}
 \le P_H(E)/\kappa=\|\mathbf1_E\|_{L^p}.
\]
Both inequalities are equalities. If either summand is zero,
then $f=0$ or $f=\mathbf1_E$. Otherwise, strict convexity of
$L^p$ implies
\[
 \frac{f}{\|f\|_{L^p}}
 =\frac{\mathbf1_E-f}{\|\mathbf1_E-f\|_{L^p}}.
\]
Adding the two proportional functions gives $f=c\mathbf1_E$, where
\[
 c=\frac{\|f\|_{L^p}}
 {\|f\|_{L^p}+\|\mathbf1_E-f\|_{L^p}}\in[0,1].
\]

Finally, comparing horizontal derivatives gives
$-\nu_E\lambda=D_Hf=cD_H\mathbf1_E=-c\nu_E\mu_E$.
Taking total variations yields $\lambda=c\mu_E$, and hence $T=cT_E$.
\end{proof}

\subsection{Identification of the Pansu ball}

\begin{proof}[Proof of Theorem~\ref{thm:weak-limit-characterization}]
Strict convergence in (B1) gives weak convergence
$\sigma_{H,j}\rightharpoonup\mu_E$ and convergence of their total
masses to $P_H(E)$. Thus the perimeter measures are tight.
The bounds following \eqref{eq:centres} place the centres of points
in any fixed compact set in another compact set, independently of
the unit normal. Consequently the centre measures are tight as well.
Along a subsequence,
\[
 \vartheta_j=(\Ca_j)_\#\sigma_{H,j}\rightharpoonup\vartheta,
 \qquad \vartheta(\Hn)=P_H(E)>0.
\]
Fix $a\in\spt\vartheta$ and $r>0$. Choose
$\varphi\in C_c^1(B_r(a))$, $0\le\varphi\le1$, with
$\vartheta(\varphi)>0$, where $B_r(a)$ is Euclidean.
After a further subsequence, the weighted measures and currents
converge weakly:
\[
 \varphi(\Ca_j)\sigma_{H,j}\rightharpoonup\lambda_\varphi,
 \qquad \varphi(\Ca_j)T_{F_j}\rightharpoonup T_\varphi.
\]
Strict convergence in (B1) gives
$\sigma_{H,j}\rightharpoonup\mu_E$ and $T_{F_j}\rightharpoonup T_E$.
The two nonnegative weights $\varphi(\Ca_j)$ and
$1-\varphi(\Ca_j)$ imply the measure inequalities
\[
 \|T_\varphi\|\le\lambda_\varphi,\qquad
 \|T_E-T_\varphi\|\le\mu_E-\lambda_\varphi.
\]
Using $\mathfrak t$ to identify current densities with horizontal
vectors, write their Radon--Nikodym representations as
\[
 \lambda_\varphi=a_\varphi\mu_E,\qquad
 T_\varphi=\mathfrak t(v_\varphi)\mu_E.
\]
The measure inequalities give, $\mu_E$-almost everywhere,
\[
 0\le a_\varphi\le1,\qquad
 |v_\varphi|\le a_\varphi,\qquad
 |\nu_E-v_\varphi|\le1-a_\varphi.
\]
Consequently,
\[
 1=|\nu_E|\le|v_\varphi|+|\nu_E-v_\varphi|
 \le a_\varphi+(1-a_\varphi)=1.
\]
Equality forces $|v_\varphi|=a_\varphi$ and the two vectors
$v_\varphi$ and $\nu_E-v_\varphi$ to be nonnegative multiples of
$\nu_E$. This includes the cases $a_\varphi=0$ and $a_\varphi=1$.
Thus $v_\varphi=a_\varphi\nu_E$ and
$T_\varphi=\tau_E\lambda_\varphi$.
Lemma~\ref{lem:centre-weight-error} makes $T_\varphi$ closed.
Proposition~\ref{weak:prop:closed-subcurrent} and total masses then give
\[
 \lambda_\varphi=c_\varphi\mu_E,
 \qquad c_\varphi=\frac{\vartheta(\varphi)}{P_H(E)}>0.
\]

By \eqref{eq:profile}, $x\in\PSphere_{\Ca_j(x)}$.
The weighted measures are therefore supported in the compact set
\[
 K_r=\overline B_r(a)\cdot\PSphere_0,
\]
and so is their weak limit
$c_\varphi\mu_E$. Since $c_\varphi>0$,
$\spt\mu_E\subset K_r$ for every $r>0$. If $x\in\bigcap_{r>0}K_r$,
choose $b_k\to a$ with $b_k^{-1}\cdot x\in\PSphere_0$;
closedness of $\PSphere_0$ gives $a^{-1}\cdot x\in\PSphere_0$.
Thus
\[
 \bigcap_{r>0}K_r=\PSphere_a,\qquad
 \spt\mu_E\subset\PSphere_a.
\]

It follows that $D_H\mathbf1_E=0$ in $\Omega_a$ and the connected
exterior of $\overline{\Omega_a}$, so $\mathbf1_E$ is constant on each.

By \eqref{eq:sharp-constant}, $\PSphere_0$ is the union of two
graphs over $\{|z|\le1\}$. Each vertical section contains at
most two points, so Fubini's theorem gives $|\PSphere_0|=0$.
Left translations preserve Haar measure, hence $|\PSphere_a|=0$.

Boundedness of $E$ makes the exterior constant zero. Since $|E|>0$
and $|\PSphere_a|=0$, the interior constant is one. Therefore
\[
 E=\Omega_a\quad\text{almost everywhere}.
\]

\end{proof}

\subsection{Proof of the sharp inequality}\label{sec:limit}

\begin{proof}[Proof of Theorem~\ref{thm:main}]
Normalize any global minimizer $E_0$ by \eqref{eq:target-scale}.
Proposition~\ref{prop:normalized-approximants} and
Theorem~\ref{thm:weak-limit-characterization} give
$\delta_{\rho_0}E_0=\Omega_a$
almost everywhere. Since dilations are group automorphisms,
\[
 E_0=(\delta_{\rho_0^{-1}}a)\cdot\delta_{\rho_0^{-1}}\Omega
 \quad\text{up to a null set}.
\]

Dilations preserve minimality, so for every $v>0$, \eqref{sel:tag-2-11} becomes
\begin{equation}\label{eq:sharp-profile}
 I(v)=P_H(\Omega)\left(\frac{v}{|\Omega|}\right)^{(Q-1)/Q}
 =C_n v^{(Q-1)/Q}.
\end{equation}
For a finite-perimeter set $F$ with $0<|F|<\infty$, $P_H(F)\ge I(|F|)$ gives
\eqref{eq:sharp-isoperimetric}. Translates and dilates of $\Omega$
attain equality, so $C_n$ is optimal. Conversely, equality makes $F$ a global minimizer,
so the classification above applies.
\end{proof}

\begin{corollary}[Convergence of reconstructed centres]
\ifcsname r@cor:centre-convergence\endcsname%
\bookmark[level=3,dest={\getrefbykeydefault{cor:centre-convergence}{anchor}{}}]%
  {Corollary \ref*{cor:centre-convergence}: Convergence of reconstructed centres}%
\fi
\label{cor:centre-convergence}
Under the hypotheses of Theorem~\ref{thm:weak-limit-characterization},
suppose the $F_j$ lie in a common compact set and write
$E=\Omega_a$ almost everywhere. Then
\[
 (\Ca_j)_\#\sigma_{H,j}\rightharpoonup P_H(E)\delta_a,
 \qquad
 \int_{\Sigma_j}|\Ca_j-a|\dd\sigma_{H,j}\longrightarrow0.
\]
\end{corollary}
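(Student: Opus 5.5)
The plan is to reuse the weighted-current argument from the proof of Theorem~\ref{thm:weak-limit-characterization}, now applied with arbitrary test weights, to identify the limit of the centre measures. After that, tightness and the compact confinement of the $F_j$ upgrade weak convergence of $\vartheta_j=(\Ca_j)_\#\sigma_{H,j}$ to the $L^1$ statement.

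\textbf{Step 1: every subsequential limit is a point mass.} Since the $F_j$ lie in a common compact set, the bounds after \eqref{eq:centres} put all centres $\Ca_j(\Sigma_j)$ in a fixed compact set. Hence $\{\vartheta_j\}$ is tight. Its masses $\sigma_{H,j}(\Hn)$ converge to $P_H(E)$ by (B1). Let $\vartheta$ be any subsequential weak limit, so $\vartheta(\Hn)=P_H(E)$. In the proof of Theorem~\ref{thm:weak-limit-characterization}, every $b\in\spt\vartheta$ yields $\spt\mu_E\subset\PSphere_b$, and hence $E=\Omega_b$ almost everywhere. Since $E=\Omega_a$ almost everywhere, $\Omega_b=\Omega_a$ up to a null set. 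Pansu balls of equal radius coincide almost everywhere only if their centres agree: compare supports of $\mu_{\Omega_b}=\mu_{\Omega_a}$, which are $\PSphere_b=\PSphere_a$, and note that $a^{-1}b\cdot\PSphere_0=\PSphere_0$ forces $a^{-1}b=0$, for instance by comparing the two poles $(0,\pm\pi/4)$ and the maximal horizontal extent. Thus $\spt\vartheta=\{a\}$, so $\vartheta=P_H(E)\delta_a$. Because the limit is the same along every subsequence, the whole sequence converges.

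\textbf{Step 2: the $L^1$ convergence.} We have
\[
 \int_{\Sigma_j}|\Ca_j-a|\dd\sigma_{H,j}=\int|y-a|\dd\vartheta_j(y).
\]
The function $y\mapsto|y-a|$ is continuous and bounded on the fixed compact set carrying every $\vartheta_j$. Multiplying by a cutoff equal to one there gives a $C_c$ test function. Weak convergence then yields the limit $\int|y-a|\dd(P_H(E)\delta_a)=0$.

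\textbf{Main obstacle.} The delicate point is Step 1, namely that every point of $\spt\vartheta$ produces the inclusion $\spt\mu_E\subset\PSphere_b$. This requires the constancy argument of Proposition~\ref{weak:prop:closed-subcurrent} to hold along an arbitrary subsequence. It does, because (B1)--(B3) are inherited by subsequences. The second ingredient is the injectivity of $b\mapsto\PSphere_b$. For that I would use that $\PSphere_0$ is compact with a nonempty interior region, so a nontrivial translate cannot map it onto itself. The cleanest way to see this is that the Haar-measure barycentre of $\Omega_b$ in exponential coordinates equals $b$ by symmetry, up to the group-law correction, which vanishes by the oddness of $\Omega_0$.
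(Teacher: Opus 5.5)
Your proposal is correct and follows the paper's proof essentially step for step. You apply the argument of Theorem~\ref{thm:weak-limit-characterization} to every support point $b$ of an arbitrary subsequential limit to get $E=\Omega_b$, identify $b=a$ via the barycentre identity $\int_{\Omega_b}x\,dx=|\Omega|b$ (exactly the paper's route), use mass convergence to obtain $P_H(E)\delta_a$, and test against a compactly supported function equal to $|y-a|$ on the common centre support. Your alternative injectivity check via the poles and horizontal extent of $\PSphere_0$ is also valid but not needed.
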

\begin{proof}
The bounds after \eqref{eq:centres} put all centre measures in a
common compact set. The proof of
Theorem~\ref{thm:weak-limit-characterization} gives
$E=\Omega_b$ for every $b$ in the support of any subsequential
limit. The symmetries of \eqref{eq:sharp-constant} and the group law give
$\int_{\Omega_b}x\,dx=|\Omega|b$, hence $b=a$.
Convergence of total masses therefore identifies every weak limit
as $P_H(E)\delta_a$. Testing this convergence against a compactly
supported continuous function equal to $|b-a|$ on the common
centre support proves the integral limit.
\end{proof}

\bookmarksetup{depth=-1}

\begingroup
\footnotesize\linespread{1.0}\selectfont
\bigskip
\noindent Department of Mathematics, National Taiwan University, Taipei 10617, Taiwan\\
\textit{Email address:} \href{mailto:b14201028@ntu.edu.tw}{b14201028@ntu.edu.tw}

\medskip
\noindent Institute of Mathematics, Academia Sinica, Taipei 106319, Taiwan\\
\textit{Email address:} \href{mailto:wtkao@as.edu.tw}{wtkao@as.edu.tw}

\medskip
\noindent Department of Mathematics, National Taiwan University, Taipei 10617, Taiwan\\
\textit{Email address:} \href{mailto:d13221002@ntu.edu.tw}{d13221002@ntu.edu.tw}
\endgroup
\end{document}